\documentclass[perreview, 12pt]{elsarticle}

\usepackage{amsmath,amsthm}
\usepackage[latin1]{inputenc}
\usepackage{enumerate}

\usepackage{url}
\usepackage{color}
\usepackage{amsfonts}
\usepackage{subfigure}
\usepackage{algorithm}
\usepackage{algpseudocode}
\algnotext{EndFor}
\algnotext{EndIf}

\usepackage{graphicx}

\usepackage{epstopdf}

\usepackage[pdfpagelabels]{hyperref}
\hypersetup{colorlinks=true, linkcolor=blue, citecolor=blue, urlcolor=blue}

\setlength{\parindent}{0.3in}
\newtheorem{remark}{Remark}

\newtheorem{definition}{Definition}

\newtheorem{lemma}[remark]{Lemma}
\newtheorem{theorem}[remark]{Theorem}
\newtheorem{proposition}[remark]{Proposition}
\newtheorem{corollary}[remark]{Corollary}

\newtheorem{observation}{Observation}

\newcommand\etal{\emph{et al.}}

\DeclareMathOperator{\adim}{adim}

\addtolength{\hoffset}{-1.7cm}
\addtolength{\textwidth}{3.8cm}
\addtolength{\voffset}{-2cm} \addtolength{\textheight}{3cm}

\journal{Information Sciences}

\title{$k$-Metric Antidimension: a Privacy Measure for Social Graphs}

%

\author[a1]{Rolando Trujillo-Rasua \corref{cor1}}

\author[a2]{Ismael G. Yero}

\address[a1]{Interdisciplinary Centre for Security, Reliability and
Trust \\ University of Luxembourg \\
6, rue Richard Coudenhove-Kalergi
L-1359 Luxembourg}
\address[a2]{Departamento de Matem\'aticas, Escuela Polit\'ecnica Superior
de Algeciras \\ Universidad de C\'adiz, Spain \\
Av. Ram\'on Puyol s/n, 11202 Algeciras, Spain}

\cortext[cor1]{Corresponding author. Phone: +352 466 644 5458. Fax: 466 644 3
5458. Email:
rolando.trujillo@uni.lu}

\begin{document}

\begin{abstract}

The study and analysis of social
graphs impacts on a wide range of applications, such as community decision
making
support and recommender systems. With
the boom of \emph{online} social networks, such analyses are benefiting
from a massive collection and publication of social graphs at large scale.
Unfortunately, individuals' privacy right might be
inadvertently violated when publishing this type of data. In this
article, we introduce $(k, \ell)$-anonymity; a novel privacy
measure aimed at evaluating the resistance of
social graphs to active attacks. $(k, \ell)$-anonymity is based on a new
problem in Graph Theory, the \emph{$k$-metric antidimension} defined
as
follows.

Let $G = (V, E)$ be a simple connected graph and $S = \{w_1, \cdots,
w_t\}
\subseteq V$ an ordered subset of vertices. The metric representation of a
vertex $u\in V$ with respect to $S$ is the $t$-vector $r(u|S) = (d_G(u, w_1),
\cdots, d_G(u, w_t))$,
where $d_G(u, v)$ represents the length of a shortest $u-v$ path in $G$.
We call $S$ a $k$-antiresolving set if $k$ is the largest positive integer such
that
for every vertex $v \in V-S$ there exist other $k-1$ different vertices $v_1,
\cdots, v_{k-1} \in V-S$ with
$r(v|S) = r(v_1|S) = \cdots = r(v_{k-1}|S)$. The
$k$-metric antidimension of $G$ is the minimum cardinality among all the
$k$-antiresolving sets for $G$.

We address the $k$-metric antidimension problem by proposing a
true-biased algorithm with success rate above $80 \%$ when considering random
graphs of size at most $100$.
The proposed algorithm is used to determine the privacy guarantees offered
by two real-life social graphs with respect to $(k,
\ell)$-anonymity. We also
investigate theoretical properties
of the $k$-metric antidimension of graphs. In
particular, we focus on paths, cycles, complete bipartite graphs and trees.

\end{abstract}

\begin{keyword}
anonymity \sep active attack \sep social network \sep graph \sep
resolving set \sep $k$-metric antidimension

\end{keyword}


\maketitle

\section{Introduction}


Social networking services are widely used in modern society as
illustrated by the Alexa's Top 500 Global Sites
statistics\footnote{\url{http://www.alexa.com/topsites}} where
\emph{facebook} and \emph{linkedin} rank $2$nd and $11$th
respectively in 2014. Such
popularity has enabled governments and third-party enterprises to massively
collect social network data, which eventually can be
released\footnote{See for example \url{http://snap.stanford.edu/data/}} for
mining and
analysis purposes.


The power of social network analysis is questionless. It might uncover
previously unknown knowledge such as community-based problem, media use,
individual engagement, amongst others. Sociology is a trivial example of a
field that certainly benefits from social
graphs publication. Many other fields (\emph{e.g.,} economics, geography, or
political science) and systems (\emph{e.g.,}
service-oriented systems, advertisers,
or recommended systems) improve
their
decisions, processes, and services, based on users interaction.


However, all these benefits are not cost-free. An adversary can compromise
users privacy using the
published social network, which results in the disclosure of sensitive
data such as e-mails, instant messages, or relationships. A simple and
popular
approach to prevent this privacy problem is
\emph{anonymization} by means of removing potential
identifying
attributes. Doing so,
aggregate knowledge still can be inferred (\emph{e.g.,} connectivity,
distance, or node degrees) while the ``who'' information has been removed. In
practice, however, this naive approach is not enough for
protecting users' privacy.


What makes social network anonymization a challenging problem is the combination
of the adversary's background knowledge with
the released structure of the network. Considering a social network as
a simple graph, in which individuals
are represented by vertices and their bidirectional relationships by edges,
the adversary's background knowledge about a victim may take many forms,
\emph{e.g.,} vertex degrees, connectivity, or local neighborhood. This
structural
knowledge, together with the released graph, is often enough to perform
\emph{passive attacks} where the users and their relationships are
re-identified~\cite{5207644}.


Other privacy attacks exist. In 2007, Backstrom
\etal~\cite{Backstrom:2007:WAT:1242572.1242598} introduced \emph{active attacks}
based on
the creation and insertion in the
network of \emph{attacker nodes} controlled by the adversary. The attacker
nodes could be either
new accounts
with pseudonymous or spoofed identities (Sybil nodes), or legitimate users in
the network who collude with the adversary. Attacker nodes establish links with
other nodes in
the network (also
between themselves) aiming at creating a sort of fingerprint in the network.
Once the social graph is released, the adversary just need to retrieve such a
fingerprint (the attacker nodes) and use it as a hub to re-identify other nodes
in the network. Backstrom et. al. proved that $O(\sqrt{\log n})$ attacker nodes
in the network
can compromise the privacy of arbitrary targeted nodes with high
probability, which makes active attack particularly dangerous.

\subsection{Contribution and plan of the article}


Several active attacks to social graphs have been proposed. They could even
target random nodes in the network as recently shown in \cite{6275441}.
However, to the best of our knowledge, no privacy measure aimed at evaluating
the resistance of a social graph to this kind of attack exists. The lack of
such a measure prevents the development of privacy-preserving methods with
theoretically proven privacy guarantees.


In this article we define \emph{$(k, \ell)$-anonymity}; a privacy measure that
can be applied to
real-life social graphs in order to measure their resistance to active
attacks. The proposed privacy measure copes with adversaries whose background
knowledge
concerning a node $u$ and a subset $S$ of attacker nodes is the metric
representation of $u$ with respect to $S$. $(k, \ell)$-anonymity turns
out to be based on a new
problem in Graph Theory:
the \emph{$k$-metric antidimension}. We propose a
true-biased algorithm whose computational complexity and success rate can be
adjusted. Empirical results show that our algorithm finds
$k$-antiresolving basis in random graphs of order at most $100$ with a success
rate above $80 \%$. Our algorithm has been also used to
determine the privacy offered by two real-life social graphs against active
attacks. Finally, we
provide theoretical results on the
$k$-metric antidimension of graphs, such as paths, cycles, complete bipartite
graphs and trees.


The rest of this article is structured as follows.
Section~\ref{sec_related_work} briefly
reviews the literature on privacy-preserving publication of social network
data. Section~\ref{sec_antiresolving_sets} presents the metric representation as a
reasonable definition of the adversary's background knowledge. It also
introduces the $k$-metric antidimension as the basis for the privacy
measure $(k, \ell)$-anonymity. In Section \ref{sect-algo} we present a
true-biased algorithm for computing the $k$-metric antidimension of a
graph, and evaluate the proposed algorithm through experiments. Preliminary
results
(mathematical properties) on the new problem
(the $k$-metric antidimension) are provided in Sections
\ref{sec_metric_antidimension} and \ref{section-trees} (the later
specifically addresses the case of tree graphs).
Section~\ref{sec_conclusions} draws conclusions and future
work.

\section{Related work}\label{sec_related_work}


A social graph $G = (V, E)$ is a simple graph where $V$ represents the set of
social actors and $E \subseteq V \times V$ their relationships. Both vertices and
edges could be enriched with attribute values such as weights
representing trustworthiness or labels providing meaning. We consider, however,
social network data in its most ``simplest'' form, \emph{i.e.}, a simple graph without
further annotation.


Privacy breaches in social networks are mainly categorized in
identity disclosure or link disclosure~\cite{Wu2010}. To perform such attacks
adversaries rely on background knowledge, which is usually defined as
structural knowledge such as vertex degrees~\cite{Liu:2008:TIA:1376616.1376629} or neighborhoods~\cite{4497459}. The
assumptions on the adversary's background knowledge determine the
type of privacy attacks and the corresponding countermeasures.

Privacy-preserving methods for the publication of social graphs are normally
based on the well-known concept
$k$-anonymity~\cite{Samarati98protectingprivacy} adapted to graphs.
$k$-anonymity, initially proposed for microdata, aims at ensuring that no
record in a
database can be re-identified with probability higher than $1/k$. To do so,
identifying attributes should be obviously removed, and any combination of
non-identifying attribute values should not be unique in the database. In
practice, not all the attributes need to be combined, because they do not
belong to the adversary's knowledge. This leads to the concept of
\emph{quasi-identifier}, that is, an attribute that can be found in external
source of information and, combined with other quasi-identifiers, can uniquely
identify a record in the database.

Even though graphs can be represented in tabular form and, thus,
graph $k$-anonymity can be defined in terms of quasi-identifying
attributes~\cite{DBLP:journals/soco/StokesT12}, graph $k$-anonymity is
typically defined in terms of structural properties of the graph rather than on
attributes. For instance in~\cite{Hay:2008:RSR:1453856.1453873}, the
adversary's background knowledge
is defined as a \emph{knowledge query} $Q(x)$ evaluated for a
given target node of the original graph $G$. The knowledge query $Q(x)$ allows
the
creation of a candidate set consisting of $\{y \in V | Q(x) = Q(y)\}$. In other
words, all the nodes in the network matching the query $Q(.)$ are equally
likely to be the target node $x$. This simple concept is the basis of several
passive attacks and privacy-preserving methods in the publication of social
graphs~\cite{Liu:2008:TIA:1376616.1376629,4497459,Zou:2009:KGF:1687627.1687734}.

Other privacy notions based on entropy rather than on $k$-anonymity have been
proposed~\cite{entropy}. This type of privacy measure is better suited for
methods based on random addition, deletion, or switching of edges. The
perturbation could be made in such a way that the number of edges or the
degree of the vertices are preserved~\cite{DBLP:conf/sdm/YangW08,randomizing}.
However, empirical results obtained
in~\cite{Ying:2009:CRK:1731011.1731021,randomizing}
suggest that random obfuscation
poorly preserves the topological features of the network.


Passive attacks to social networks can be combined with active attacks. In
addition to structural
knowledge, in an active attack the
adversary manages to control a subset of nodes (attacker nodes) of the
original graph $G$~\cite{Backstrom:2007:WAT:1242572.1242598}.  The attacker
nodes aim at
creating links with their
victims by either identity theft or cloning of existing users
profiles~\cite{Bilge:2009:YCB:1526709.1526784}. They also establish
links between themselves so as to build a subgraph $H$ of attacker nodes with
the following properties: i) $H$ can be efficiently identified in $G$ and ii) $H$ does not have a non-trivial automorphisms. Once $H$ has been
identified, the adversary is able to re-identify neighbor nodes of $H$~\cite{Backstrom:2007:WAT:1242572.1242598} or even
arbitrary nodes in the network~\cite{5207644,6275441}.

Performing active attacks is not easy, given that there exist
several detection
mechanisms
of attacker or Sybil nodes in a
network~\cite{Yu:2010:SNS:1959337.1959353}. However, such defenses
strongly depend on assumptions on the topological
structure of the social network, which does not hold in many
real-world scenarios~\cite{Mohaisen:2011:MFF:1968613.1968648}. Actually,
recent works aim at mitigating, instead of preventing, the impact of
Sybil attacks~\cite{Viswanath:2012:CSS:2168836.2168867}. Furthermore, a group
of users who collude in
order to breach the privacy of other users in the network can be also regarded
as attacker nodes.

Other types of active attacks exist. For
instance, the maximal vertex coverage (MVC) attack consists in attacking a few
nodes so as to delete as many edges of the network as possible. In this attack,
the attacker tries to convince some users to
leave the social network in order to reduce the number of residual
social ties. Metrics to quantify the impact of MVC attacks have been studied
in~\cite{mvc}. MVC is not a privacy attack, though.


While there exist several published active attacks to social graphs, there
does not exist yet a rational privacy metric for evaluating the resistance
of social graphs to this type of privacy attack. To overcome this problem, in
this
article we
introduce $(k, \ell)$-anonymity; a privacy notion based on $k$-anonymity and
the metric
representation of nodes in a graph. Note that, privacy notions with the
same name has been already proposed. For instance, Feder and Nabar proposed
$(k, \ell)$-anonymity where $\ell$ represents the number of common neighbors of
two nodes~\cite{DBLP:journals/corr/abs-0810-5578}. This notion was later
generalized by Stokes and Torra
in~\cite{DBLP:journals/soco/StokesT12}. In our privacy notion, however, $\ell$
represents an upper bound on the $k$-metric antidimension of the graph.

Interested readers could
refer to~\cite{6113304,Wu2010,Zhou:2008:BSA:1540276.1540279}
for further reading on privacy-preserving publication of social graphs.

\section{Privacy against active attacks}\label{sec_antiresolving_sets}


In this section we define the metric representation of nodes with
respect to a set of attacker nodes $S$ as the adversary's background knowledge.
We also introduce the resulting privacy measure, named $(k, \ell)$-anonymity,
and
its related mathematical problem: the $k$-metric
antidimension.

\subsection{Adversary's background knowledge}


Vulnerabilities in an anonymized social graphs are better understood once
the adversary's knowledge has been properly modeled. This knowledge can be
acquired from public information sources and through malicious actions. In
practice, the adversary could even be a close friend, which makes
the publication of social network where users cannot re-identify themselves a
reasonable privacy goal.

Adversary's background information in passive attacks is typically modeled as
structural knowledge on the network. This is a sort of \emph{global} view that
provides
adversaries with the ability to partition the set of nodes into equivalence
classes of structurally equivalent nodes. The strongest of those structural
relations is automorphism~\cite{Zou:2009:KGF:1687627.1687734}. Two
vertices $u$ and
$v$ are automorphically
equivalent if
there exists an isomorphism from the graph
to itself such that $u$ maps to $v$. Other types of structural relations are
based on vertex degrees, connectivity, or local
neighborhood. Intuitively, structurally equivalent vertices are
indistinguishable with respect to the considered structural property.

However, adversaries controlling attacker nodes in a network
are undoubtedly more powerful. In addition to the global view, they
have a \emph{local view} determined by the relationship of the attacker
nodes with the network. To illustrate this let us consider the graph shown
in Figure~\ref{fig:graph:path}. With respect to the vertex degree property,
$v_2$ and $v_3$ are indistinguishable. They are easily re-identifiable by
either an adversary or a legitimate user owning the vertex $v_4$ and knowing
its distance to $v_2$ and $v_3$, though.

\begin{figure}
\centering
\includegraphics[width=0.45\textwidth]{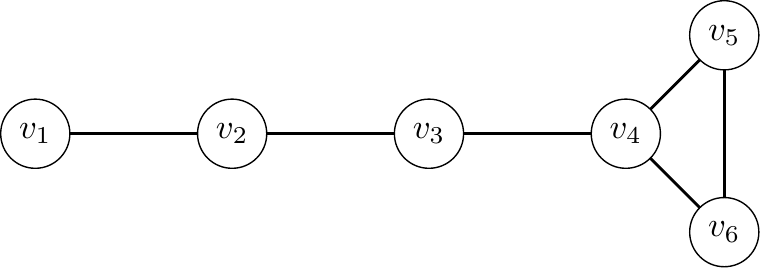}
\caption{An example.}
\label{fig:graph:path}
\end{figure}

A first step towards modeling such local view
was given by Hay et al.~\cite{Hay:2008:RSR:1453856.1453873}, who defined
the concept of \emph{hub fingerprint queries}. A hub is
a relevant node in the network with high degree and high centrality, and a hub
fingerprint for a target node $x$ is a vector of distances from $x$ to hub
vertices. Although not explicitly mentioned
in~\cite{Hay:2008:RSR:1453856.1453873}, the largest hub fingerprint for a
target node $x$
is indeed the \emph{metric representation} of $x$ with respect to the hub
vertices. We
formally define this concept as follows.

\begin{definition}[Metric representation]\label{def_vertex_representation}
Let $G = (V, E)$ be a simple connected graph and $d_G(u, v)$ be the length of the shortest
path between the vertices $u$ and $v$ in $G$. For an ordered set $S =\{u_1,\cdots, u_{t}\}$ of vertices in $V$ and a vertex $v$, we call $r(v|S) = (d_G(v, u_1), \cdots, d_G(v, u_{t}))$ the \emph{metric representation} of $v$ with respect to $S$.
\end{definition}

Similarly to Hay et al. work~\cite{Hay:2008:RSR:1453856.1453873},
we define the
adversary's background knowledge about a target
node $u$ as the metric representation of $u$ with respect to $S$. In this
article, however, we assume $S$ to be any subset of attacker nodes rather than
hub vertices only.
%
%

It is worth mentioning that the concept of metric representation is also the
basis of two weel-known concepts: resolving sets and metric dimension (cf.
Definition~\ref{def:resolving}). Both
have been already motivated
by  problems related to unique recognition of an intruder position in a
network~\cite{Slater1975}, where resolving sets were called \emph{locating
sets}. The name ``resolving set'' is
due to Harary and Melter \cite{Harary1976}, who introduced the concept
in 1976.


\begin{definition}[Resolving set and metric dimension]\label{def:resolving}
Let $G = (V, E)$ be a simple connected graph. A set $S\subset V(G)$ is said to be a
\emph{resolving set} for $G$ if any pair of vertices of $G$ have different
metric representations with respect to $S$. A resolving set of the smallest
possible cardinality is called a \emph{metric basis}, and its cardinality the
\emph{metric dimension} of $G$.
\end{definition}

\subsection{$(k, \ell)$-anonymity}


$(k, \ell)$-anonymity is a privacy measure that evolves from the
adversary's background knowledge defined previously. It is based on the concept of
\emph{$k$-antiresolving set} defined as follows.

\begin{definition}[$k$-antiresolving set]\label{def_antiresolving_set}
Let $G = (V, E)$ be a simple connected graph and let $S = \{u_1,\cdots, u_{t}\}$ be a subset of
vertices of $G$. The set $S$ is called a $k$-\emph{antiresolving set} if $k$ is the greatest positive integer such that
for every vertex $v \in V-S$ there exist at least $k-1$ different vertices $v_1, \cdots, v_{k-1} \in V-S$ with
$r(v|S) = r(v_1|S) = \cdots = r(v_{k-1}|S)$, \emph{i.e.}, $v$ and $v_1, \cdots, v_{k-1}$ have the same metric representation with respect to $S$.
\end{definition}

The following concepts derive from Definition~\ref{def_antiresolving_set},
whose study is one of the goals of this article.

\begin{definition}[$k$-metric antidimension and $k$-antiresolving basis]\label{def_metric_antidimension}
The $k$-\emph{metric antidimension} of a simple connected graph $G = (V, E)$ is the
minimum cardinality amongst the $k$-antiresolving sets in $G$ and is denoted by $\adim_k(G)$. A $k$-antiresolving set of cardinality $\adim_k(G)$ is called a $k$-\emph{antiresolving basis} for $G$.
\end{definition}

It is easy to prove that if the set of attacker nodes $S$ is a
$k$-antiresolving set, the adversary cannot uniquely re-identify
other nodes in the network with probability higher than $1/k$. However, given
that $S$ is unknown, the privacy measure should quantify over all possible
subsets $S$ as follows.

\begin{definition}[$(k, \ell)$-anonymity]\label{def_graph_anonymity}
A graph $G$ meets $(k, \ell)$-\emph{anonymity} with respect
to active attacks if $k$ is the smallest positive integer such that the
$k$-metric antidimension of $G$ is lower than or equal to $\ell$.
\end{definition}


In Definition~\ref{def_graph_anonymity} the parameter $k$ is used as a privacy
threshold, whilst $\ell$ is an upper bound on the expected number of attacker
nodes in the network. Because attacker nodes are
difficult to
enrol in a network without been detected~\cite{Yu:2010:SNS:1959337.1959353},
$\ell$ can be estimated through
statistical analysis. A fair assumption, for example, is that the number of
attacker nodes is significantly lower than
the total number of nodes in the network. To further explain the role of $k$
and $\ell$ in
Definition~\ref{def_graph_anonymity}
we provide the following example result.

\begin{theorem}\label{def_simple_theorem}
For every $n > 0$ and $0 < \ell < n$, the graph $K_n$ meets $(n -
\ell,\ell)-anonymity$.
\end{theorem}

\begin{proof}
Since all the vertices in a complete graph $K_n$ are connected, every subset
$S$ of vertices of
$K_n$ is an $(n-|S|)$-antiresolving set. Therefore, the $k$-metric antidimension
of $K_n$ is $n-k$.

According to Definition~\ref{def_graph_anonymity}, the $k$-metric antidimension
should be lower than or equal to $\ell$, which implies that $k \geq n - \ell$.
Moreover, $k$
should be the smallest positive integer satisfying the previous condition.
Therefore, $K_n$ holds $(n - \ell, \ell)$-\emph{anonymity}.
\end{proof}

\begin{corollary}\label{def_simple_corollary}
A social graph $K_n$ guarantees that a user cannot be re-identified
with probability higher than $\frac{1}{n - \ell}$ by an adversary controlling
$\ell$ attacker nodes.
\end{corollary}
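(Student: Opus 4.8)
The plan is to derive the corollary directly from Theorem~\ref{def_simple_theorem} together with the privacy interpretation of $k$-antiresolving sets stated just before that theorem. An adversary controlling $\ell$ attacker nodes is, in our model, an adversary whose background knowledge is the metric representation of target nodes with respect to a set $S$ with $|S| = \ell$. So the first step is to identify the set of attacker nodes with such a set $S \subseteq V(K_n)$ of cardinality $\ell$.

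Next I would invoke the structural fact established in the proof of Theorem~\ref{def_simple_theorem}: in a complete graph every subset $S$ of vertices is an $(n - |S|)$-antiresolving set. Applying this with $|S| = \ell$ shows that the adversary's set $S$ is an $(n - \ell)$-antiresolving set. By Definition~\ref{def_antiresolving_set} this means that for every non-attacker vertex $v \in V - S$ there exist at least $n - \ell - 1$ other vertices sharing the same metric representation with respect to $S$; hence each target node is indistinguishable, from the adversary's point of view, from a group of at least $n - \ell$ vertices.

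Finally I would appeal to the general privacy guarantee: since the adversary's only knowledge is the metric representation, and at least $n - \ell$ vertices produce identical representations, all of these vertices are equally likely to be the target. The best the adversary can do is guess uniformly among them, so the re-identification probability is at most $\frac{1}{n - \ell}$, which is the claimed bound.

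The only delicate point is the step connecting the combinatorial statement to the probabilistic one: one must argue that vertices with identical metric representations are genuinely indistinguishable to the adversary, so that no strategy beats uniform guessing. In the present setting this is immediate because the metric representation is, by assumption, the entirety of the adversary's background knowledge; for a complete graph it is even more transparent, since every non-attacker vertex has the all-ones representation and the $n - \ell$ target nodes are mutually interchangeable.
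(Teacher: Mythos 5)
Your proposal is correct and follows essentially the same route the paper intends: the paper states this corollary without a separate proof, treating it as an immediate consequence of Theorem~\ref{def_simple_theorem} (whose proof establishes that every subset $S$ of $V(K_n)$ is an $(n-|S|)$-antiresolving set) together with the remark preceding Definition~\ref{def_graph_anonymity} that a $k$-antiresolving set of attacker nodes limits re-identification probability to $1/k$. You unwind exactly these two ingredients, and your explicit attention to the step from indistinguishable metric representations to the $\frac{1}{n-\ell}$ probability bound fills in the detail the paper leaves as ``easy to prove.''
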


These simple and intuitive result obtained in Theorem~\ref{def_simple_theorem}
and Corollary~\ref{def_simple_corollary} shows the role of the privacy measure
$(k,
\ell)$-anonymity in privacy-preserving publication of social graphs. Before
releasing a social graph $G$, the goal is to find $k$ such that $G$
satisfies $(k,\ell)$-anonymity. To do so, theoretical results and efficient
algorithms on the $k$-metric antidimension of a graph need to be investigated.


\section{Computing the $k$-metric antidimension}\label{sect-algo}

Computing the $k$-metric antidimension of a graph seems to be a challenging
problem, whose hardness ought to be investigated. It should be
remarked that its counterpart: the $k$-metric dimension is an NP-complete problem \cite{Yero2013c} (and \cite{Khuller:1996:LG:244612.244621} for $k=1$).
Particularly, we observe that any resolving set is also a $1$-antiresolving set, which gives some intuitive idea on the hardness of computing the $k$-metric antidimension of graphs. Thus, we address the $k$-metric
antidimension problem by proposing a true-biased algorithm whose
success rate and computational cost can be balanced.

\subsection{A true-biased algorithm}

A true-biased algorithm is always correct when it returns \texttt{true},
it might fail with some small probability when its output is \texttt{false}.
True-biased algorithms normally are Monte Carlo algorithms with deterministic
running time and randomized behavior. The algorithm we introduce in this
section resembles to a
Monte Carlo algorithm in the sense that it is deterministic and has the
true-biased property. The proposed algorithm is not randomized, though.

The mathematical foundation of our algorithm requires the introduction of
notation as follows. For a given subset of vertices $X \subseteq V(G)$, we
denote $\sim_X: V(G) \times V(G)$ to the symmetric, reflexive and
transitive relation satisfying that $u \sim_X v \implies r(u | X) = r(v | X)$.
The set of equivalence classes created by $\sim_X$ over the subset of vertices
$V(G) - X$ is denoted as $C_{X}$. We deliberately abuse notation and
use $\sim_v$ and $C_v$ instead of $\sim_{\{v\}}$ and $C_{\{v\}}$ for every
vertex $v \in V(G)$.


\begin{proposition}\label{prop_1}
Let $S \subseteq V(G)$ and $S' \subseteq S$:
\begin{itemize}
	\item $u \sim_S v \implies u \sim_{S'} v$
	\item $\forall 	X \in C_{S}$  there exists $X' \in C_{S'}$ such that $X
	\subseteq X'$
	\item $\forall 	X \in C_{S}$ and  $\forall X' \in C_{S'}$, $X \cap
	X' \neq \emptyset \implies X \subseteq X'$
\end{itemize}
\end{proposition}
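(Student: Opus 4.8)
The plan is to prove the three items in order, deriving the first directly from the definition of metric representation and then obtaining the second and third as structural consequences about how the classes of $\sim_{S'}$ refine those of $\sim_S$. Throughout I read $\sim_X$ as the relation that groups vertices by their metric representation, so that $u \sim_X v$ holds exactly when $r(u|X) = r(v|X)$.

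First I would prove the first bullet. Suppose $u \sim_S v$, i.e. $r(u|S) = r(v|S)$, which means $d_G(u, w) = d_G(v, w)$ for every $w \in S$. Because $S' \subseteq S$, this equality holds in particular for every $w \in S'$, so the corresponding coordinates of $r(u|S')$ and $r(v|S')$ agree; hence $r(u|S') = r(v|S')$ and therefore $u \sim_{S'} v$. Intuitively, passing from $S$ to the smaller set $S'$ discards coordinates, which can only merge equivalence classes and never split them, so $\sim_{S'}$ is coarser than $\sim_S$.

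For the second bullet I would fix a class $X \in C_S$ and pick any representative $u \in X$. Since $X \subseteq V(G) - S$ and $S' \subseteq S$ gives $V(G) - S \subseteq V(G) - S'$, the vertex $u$ lies in $V(G) - S'$ and thus belongs to a unique class $X' \in C_{S'}$. To see $X \subseteq X'$, take any $v \in X$: then $u \sim_S v$, so by the first bullet $u \sim_{S'} v$, and because $X'$ is precisely the $\sim_{S'}$-class of $u$ we get $v \in X'$. The only point needing care is the inclusion $V(G) - S \subseteq V(G) - S'$, which guarantees that every element of $X$ is eligible to lie in a class of $C_{S'}$. The third bullet then follows from the second together with the disjointness of distinct equivalence classes: given $X \in C_S$ and $X' \in C_{S'}$ with $X \cap X' \neq \emptyset$, choose $u \in X \cap X'$; by the second bullet there is some $X'' \in C_{S'}$ with $X \subseteq X''$, so $u \in X''$, and since $X'$ and $X''$ are $\sim_{S'}$-classes sharing $u$ they must coincide, whence $X \subseteq X'' = X'$.

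I do not expect a genuine obstacle here: the entire content is the monotonicity recorded in the first bullet, and the remaining two items are routine manipulations of equivalence classes. The one place to stay alert is bookkeeping the two different ground sets $V(G) - S$ and $V(G) - S'$, so that every representative chosen from a class of $C_S$ is correctly located inside a class of $C_{S'}$.
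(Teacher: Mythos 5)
Your proof is correct. Note that the paper states Proposition~\ref{prop_1} with no proof at all, treating it as immediate from the definitions; your argument is exactly the routine verification one would supply---coordinate-wise monotonicity for the first bullet, then class refinement and disjointness of equivalence classes for the other two---including the one genuinely necessary observation that $\sim_X$ must be read as ``$u \sim_X v$ if and only if $r(u|X) = r(v|X)$'' (the paper's definition literally writes only the forward implication, under which the first bullet could fail for a finer relation).
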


\begin{lemma}\label{theo_1}
Let $S$ be a $k$-antiresolving  set and let $S' \subseteq S$. Let $Y = \{X \in
C_{S'}\,:\, | X| < k\}$, then $S'\cup (\bigcup_{y \in Y} y) \subseteq S$.
\end{lemma}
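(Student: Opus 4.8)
The plan is to first reinterpret the hypothesis in terms of the partition $C_S$. Saying that $S$ is a $k$-antiresolving set means precisely that every vertex of $V-S$ shares its metric representation with at least $k-1$ others and that $k$ is maximal with this property; equivalently, the partition $C_S$ of $V-S$ induced by $\sim_S$ has the property that every block $X \in C_S$ satisfies $|X| \ge k$ (the maximality of $k$ just fixes the smallest block size). This is the only way the $k$-antiresolving assumption will be used. Since the desired inclusion $S' \cup (\bigcup_{y \in Y} y) \subseteq S$ already contains $S' \subseteq S$ as a hypothesis, the whole content reduces to showing that each block $y \in Y$, i.e.\ each $\sim_{S'}$-class with $|y| < k$, is contained in $S$.

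I would establish this last inclusion by contraposition. Suppose some $y \in Y$ is \emph{not} contained in $S$. Because $y$ is a block of $C_{S'}$, it is a subset of $V - S'$; a vertex of $y$ lying outside $S$ must therefore lie in $(V-S') \setminus S = V - S$. So I may pick a vertex $v \in y \cap (V-S)$. Now I invoke Proposition~\ref{prop_1}: since $v \in V - S$, it belongs to a block $X \in C_S$, and by the $k$-antiresolving property $|X| \ge k$. The vertex $v$ witnesses $X \cap y \neq \emptyset$, so the third item of Proposition~\ref{prop_1} forces $X \subseteq y$. Hence $|y| \ge |X| \ge k$, contradicting $y \in Y$. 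Therefore every $y \in Y$ is contained in $S$, and combining this with $S' \subseteq S$ gives the claimed inclusion.

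The point demanding the most care—more a matter of bookkeeping than of difficulty—is that $C_S$ and $C_{S'}$ partition two \emph{different} ground sets: $C_S$ partitions $V-S$, while $C_{S'}$ partitions the strictly larger set $V-S'$. Consequently a block $y \in C_{S'}$ may absorb vertices of $S - S'$ alongside vertices of $V-S$, and one must not conflate these. The single observation that makes the argument go through is that one vertex of $y$ lying in $V-S$ is enough to pull an entire $\sim_S$-class of size at least $k$ into $y$, which is incompatible with $|y| < k$. Once this is isolated, the remaining steps are routine consequences of Proposition~\ref{prop_1}, so I do not expect any genuine obstacle beyond tracking the two equivalence relations correctly.
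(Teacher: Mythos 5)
Your proof is correct and follows essentially the same route as the paper: both arguments rest on the fact that every class of $C_S$ has size at least $k$ together with the third item of Proposition~\ref{prop_1} (intersection of classes forces containment), so that a class of $C_{S'}$ of size less than $k$ cannot meet any class of $C_S$ and must therefore lie inside $S$. Your contrapositive phrasing, picking a witness vertex $v \in y \cap (V-S)$, is just the paper's direct argument read in the opposite direction.
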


\begin{proof}
By Proposition~\ref{prop_1}, for every $X \in
C_{S}$ there exists $X' \in
C_{S'}$ such that $X \subseteq X'$, which implies that $|X'| \geq |X|
\geq k$ due to the definition of $k$-antiresolving set. Consequently, $|X'| <
k$ implies that there does not exist $X \in
C_S$ such that $X \subseteq X'$, meaning that there does not exist $X \in
C_S$ such that $X \cap X' \neq \emptyset$ according to
Proposition~\ref{prop_1}. Therefore, $X' \cap (V(G)-S) = \emptyset$ and thus
$X' \subseteq S$.
\end{proof}

In the spirit of Lemma~\ref{theo_1}, let $f: V(G) \rightarrow V(G)$ be the
function defined recursively as follows:

\begin{equation}\label{eq_1}
f(S) = \begin{cases} f(S\cup (\bigcup_{y \in Y} y)), & \mbox{if } Y = \{X \in
C_{S}\,:\, |X| < k\} \mbox{ is not empty}, \\
S, & \mbox{otherwise}.
\end{cases}
\end{equation}

According to Lemma~\ref{theo_1}, if $S$ is a subset of a
$k$-antiresolving set, so is $f(S)$. We therefore give some useful properties
of the function $f$ in Theorem~\ref{theo_3} below.

\begin{theorem}\label{theo_3}
The function defined in Equation~\ref{eq_1} satisfies the following
properties.
\begin{enumerate}
	\item $f(f(S)) = f(S)$
	\item $S' \subseteq S \implies f(S') \subseteq f(S)$
	\item $\forall S' \subset S, f(S) = f(f(S-S')\bigcup f(S'))$
	\item $S' \subseteq f(S) \implies f(S') \subseteq f(S)$
\end{enumerate}
\end{theorem}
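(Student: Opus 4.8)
The plan is to reduce all four properties to a single structural characterization of $f$: that $f(S)$ is the \emph{least} (with respect to inclusion) superset of $S$ all of whose $\sim$-classes have size at least $k$. Write $g(W) = W \cup (\bigcup_{X \in C_W,\, |X| < k} X)$ for one step of the recursion in Equation~\ref{eq_1}, so that $f$ iterates $g$ until the family $Y$ of small classes becomes empty. Because $V(G)$ is finite and each application of $g$ either fixes the set or strictly enlarges it, the iteration terminates after finitely many steps; hence $S \subseteq f(S)$, and every class in $C_{f(S)}$ has cardinality at least $k$, for otherwise the recursion would not have halted. Call a set $T$ \emph{$k$-good} when every class in $C_T$ has cardinality at least $k$; thus $f(S)$ is $k$-good for every $S$.

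First I would establish the \emph{key inclusion}: if $T$ is $k$-good and $S \subseteq T$, then $f(S) \subseteq T$. It suffices to prove the one-step statement $W \subseteq T \implies g(W) \subseteq T$ and then induct along the chain $S, g(S), g(g(S)), \dots$ terminating at $f(S)$. The one-step statement is exactly the argument underlying Lemma~\ref{theo_1}: applying Proposition~\ref{prop_1} with the pair $(T, W)$ (legitimate since $W \subseteq T$), every class $Z \in C_T$ lies in some class $X \in C_W$, so $|X| \geq |Z| \geq k$; hence any $X \in C_W$ with $|X| < k$ contains no class of $C_T$, and by the third bullet of Proposition~\ref{prop_1} meets no class of $C_T$, forcing $X \cap (V(G) - T) = \emptyset$, i.e. $X \subseteq T$. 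Thus every small class added by $g$ already lies in $T$, giving $g(W) \subseteq T$. Since $f(S)$ is itself a $k$-good set containing $S$, the key inclusion shows $f(S)$ is the least $k$-good superset of $S$.

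With this characterization the four properties are immediate. For property~1, $f(S)$ is a $k$-good superset of itself, so the least such set, $f(f(S))$, equals $f(S)$. For property~2, if $S' \subseteq S$ then $f(S)$ is a $k$-good superset of $S'$, so the key inclusion gives $f(S') \subseteq f(S)$; property~4 is the same argument with $S'$ now assumed to lie in the $k$-good set $f(S)$. For property~3, set $A = f(S - S')$ and $B = f(S')$. Since $S - S' \subseteq S \subseteq f(S)$ and $S' \subseteq S \subseteq f(S)$ with $f(S)$ $k$-good, the key inclusion gives $A, B \subseteq f(S)$, hence $A \cup B \subseteq f(S)$ and therefore $f(A \cup B) \subseteq f(S)$. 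Conversely $S = (S - S') \cup S' \subseteq A \cup B$ because $S - S' \subseteq A$ and $S' \subseteq B$; applying $f$ and using that $f(A \cup B)$ is a $k$-good superset of $S$ yields $f(S) \subseteq f(A \cup B)$. The two inclusions give property~3.

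The main obstacle is the key inclusion of the second paragraph; everything else is a formal consequence of the ``least $k$-good superset'' description. The delicate points there are (i) phrasing Lemma~\ref{theo_1} in terms of the size-$\geq k$ condition rather than the literal $k$-antiresolving hypothesis, which is harmless because that proof only uses $|Z| \geq k$ for classes $Z \in C_T$ and so transfers verbatim, and (ii) verifying that the least $k$-good superset is a genuine unique minimum rather than merely a minimal element, which is precisely what the key inclusion supplies.
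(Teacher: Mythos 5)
Your proof is correct, and it reaches the theorem by a genuinely different route than the paper does. The paper proceeds property by property: property 1 is read off Equation~\ref{eq_1}; property 2 is proved by a direct element chase (take $u \in f(S')$ outside $S$, exhibit a small class $X' \in C_{S'}$ containing it, and compare it with the class of $u$ in $C_S$ via Proposition~\ref{prop_1}); properties 3 and 4 are then obtained as formal consequences of properties 1 and 2, by essentially the same manipulations you use. You instead prove one stronger statement --- the ``key inclusion'' that $f(S)$ is the least $k$-good superset of $S$ --- and get all four properties as corollaries of that universal characterization. The combinatorial core is identical in both arguments (the class-containment bullets of Proposition~\ref{prop_1}, i.e., the computation already made in Lemma~\ref{theo_1}), so the difference is one of packaging; but your packaging buys two things. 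First, generality: your key inclusion is property 4 with $f(S)$ replaced by an arbitrary $k$-good set $T$, which makes the closure-operator nature of $f$ explicit and reusable. Second, rigor about the recursion: the paper's proof of property 2 asserts that every $u \in f(S')$ outside $S$ lies in a class $X' \in C_{S'}$ with $|X'| < k$, which is literally true only for vertices absorbed at the \emph{first} step of the recursion; vertices added at later iterations lie in small classes of $C_{g(S')}$, $C_{g(g(S'))}$, and so on, not of $C_{S'}$, so the paper's argument implicitly needs exactly the induction along the chain $S', g(S'), g(g(S')), \ldots$ that you carry out explicitly in your one-step-plus-induction proof. What the paper's route buys in exchange is brevity: given Proposition~\ref{prop_1}, its property-2 argument is shorter and requires no auxiliary notion such as $k$-goodness.
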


\begin{proof}
The first property comes straightforwardly from Equation~\ref{eq_1}. In order
to prove the second property, let $S' \subseteq S$ and $u \in f(S')$.
If $u \in S$, then $u \in f(S)$ by
definition. Let us thus assume that $u \notin
S$. Given that $u \in f(S')$, there exist $X' \in C_{S'}$ such that $|X'| < k$
and $u \in X'$. Let $X \in C_{S}$ such that $u \in X$. Note that, such an $X$
exists because $u \notin S$. According to
Proposition~\ref{prop_1}, since $X \cap X' \neq \emptyset$ and $S' \subseteq
S$, then $X \subseteq X'$, which means that $|X|
< k$ and that $X \subseteq f(S)$, which proves the second property.

The third property can be proven by using the first property. Given that $S -
S' \subset S$ and $S' \subset S$, then $f(S - S') \subseteq f(S)$ and $f(S')
\subseteq f(S)$, hence, $f(f(S - S') \bigcup f(S')) \subseteq f(S)$. Similarly,
$S-S' \subseteq f(S-S')$ and $S' \subseteq f(S')$ by definition, which implies
that $S \subseteq f(S-S') \bigcup f(S')$. Again, applying the first property we
obtain that $f(S) \subseteq f(f(S-S') \bigcup f(S'))$. The two results lead to
$f(S) = f(f(S-S') \bigcup f(S'))$.

Finally, the last property is proven as follows. If $S' \subseteq
f(S)$, then $f(S') \subseteq f(f(S))$ by applying the second property. The
proof is concluded by simply considering the first property.
\end{proof}

The function $f(.)$ is the basis of Algorithm~\ref{alg_antiresolving}, which
aims to find a $k$-antiresolving set in a graph.
Algorithm~\ref{alg_antiresolving} is an optimized version supported by
Theorem~\ref{theo_3} of the following algorithm. Let us consider all subsets
$S$ of $V(G)$ with cardinality lower than or equal to $m$. If $f(S)$ is a
$k$-antiresolving set, then
a positive output is provided. If not,
a proof that a $k$-antiresolving set does not exist is found when
$f(S) = V(G)$ for every $S \subseteq V(G)$ such that $|S| = m$. Note that, this
impossibility result comes from the monotonicity of the function $f$,
\emph{i.e.,} $S' \subseteq S \implies f(S') \subseteq f(S)$. Any other case
leads to the $\emph{unknown}$ state where neither a proof nor a disproof of
the existence of a $k$-antiresolving set can be found.

\begin{algorithm}
\caption{Given a positive integer $k$, this
algorithms outputs: i) $\texttt{true}$ if it finds a
$k$-antiresolving set, ii) $\texttt{false}$ if such a set does not
exist, iii) $\texttt{unknown}$ when neither a $k$-antiresolving set nor a
proof that such a set does no exist was found.
\label{alg_antiresolving}}
\begin{algorithmic}[1]
\Require A graph $G$, an integer value $m$ to control the exponential
explosion, and the integer value $k$.
\State Let $V(G) = \{v_1, \cdots, v_N\}$
\State Let $C_1 = \{f(\{v_1\}), \cdots, f(\{v_N\})\}$ \label{step_1}
\If {$\exists S \in C_1$ that is a $k$-antiresolving set}
	\Return true \label{alg:line:return:true:1}
\EndIf
\For {$h = 2$ to $m$}\label{step_2}
	\State Let $C_h$ be an empty set
	\For {$i = 1$ to $|C_{h-1}|$}
		\State Let $S_i$ be the $i$th element of $C_{h-1}$
		\For {$j = i+1$ to $|C_{h-1}|$}
			\State Let $S_j$ be the $j$th element of $C_{h-1}$
			\If {$S_i \not\subseteq S_j$ and  $S_j \not\subseteq
			S_i$}\label{step_3}
				\State $S = f(S_i \bigcup S_j)$
				\If {$S$ is a $k$-antiresolving set}
					\Return \texttt{true}\label{alg:line:return:true:2}
				\EndIf
				\State Add $S$ to $C_h$
			\EndIf
		\EndFor
	\EndFor
\EndFor
\If {$\forall S \in C_m, S = V(G)$}
	\Return \texttt{false}
\Else {}
	\Return \texttt{unknown}
\EndIf
\end{algorithmic}
\end{algorithm}

Algorithm~\ref{alg_antiresolving} can be considered a true-biased algorithm if
the unknown state is regarded as a negative result. Its computational
complexity is clearly exponential
in terms of $m$. More precisely, for every $i \in \{2, \cdots, m\}$ we obtain
that $|C_{i-1}| \leq |C_i| \leq |C_{i-1}|(|C_{i-1}|-1)/2$, because $C_i$ is
formed by joining
every pair of elements of $C_{i-1}$. This means that the computational
complexity of Algorithm~\ref{alg_antiresolving} is determined by the size of
$C_m$. Given that, in the worst case, the cardinality of $C_m$ quadratically
increases with respect to $C_{m-1}$, we obtain that the worst-case
computational
complexity of this algorithm is $\mathcal{O}(N^{2^{m-1}})$.

Although $\mathcal{O}(N^{2^{m-1}})$ is double exponential in terms of $m$, when
$m << N$ it becomes
significantly lower than the computational complexity of a brute force
algorithm that considers the $2^{N}$ subsets of $V(G)$. For example, for $m =
1$, $m = 2$, and $m = 3$, the computational complexity becomes
$\mathcal{O}(N)$,
$\mathcal{O}(N^2)$, and $\mathcal{O}(N^4)$, respectively. Moreover, given
that the search space monotonically increases with $m$,
the accuracy of the algorithm also increases with $m$. In this sense, $m$
provides a trade-off between false negatives and computational cost.

It is worth remarking
that a theoretical lower bound, although not considered in the analysis, of the
computational complexity of Algorithm~\ref{alg_antiresolving} is
$\mathcal{O}(N^3)$, which is the computational complexity of the \emph{classic}
Floyd-Warshall
algorithm required to compute the metric representation of all
vertices. This prevents our method to be used on large graphs even when $m =
1$. In this case, more efficient implementations of both
Algorithm~\ref{alg_antiresolving} and the Floyd-Warshall algorithm ought to be
considered, \emph{e.g.,}~\cite{GBSW2010}.

Algorithm~\ref{alg_antiresolving} can be adapted to find a
$k$-antiresolving basis rather than a $k$-antiresolving set. To that aim, we
rely
on Proposition~\ref{prop_2} below. Proposition~\ref{prop_2} gives a
sufficient condition for the presence of a $k$-antiresolving basis. This
implies just a small modification to Algorithm~\ref{alg_antiresolving}. In
particular, the conditional statements in lines~\ref{alg:line:return:true:1}
and~\ref{alg:line:return:true:2} should take into account that such sufficient
condition is satisfied. The full pseudo-code considering this modification is
presented in
Algorithm~\ref{alg_antiresolving_basis}.

\begin{proposition}\label{prop_2}
Let $S$ be the subset of smaller cardinality in $V(G)$
such that $f(S)$ is a $k$-antiresolving set. Then, $f(S)$ is a
$k$-antiresolving basis if $\forall S' \subseteq V(G)$ such that $|S'| = |S|$
it follows that $|f(S)| \leq |f(S')|$.
\end{proposition}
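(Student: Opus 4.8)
The plan is to argue by comparison against an arbitrary $k$-antiresolving basis, and the first thing I would record is the key observation that $f$ fixes every $k$-antiresolving set. Indeed, if $T$ is a $k$-antiresolving set, then by Definition~\ref{def_antiresolving_set} every equivalence class in $C_T$ has at least $k$ elements, so the set $Y = \{X \in C_T \,:\, |X| < k\}$ appearing in Equation~\ref{eq_1} is empty and therefore $f(T) = T$. This tells me that any $k$-antiresolving set $T$ is itself a legitimate candidate for the role played by $S$ in the statement, namely a set whose image under $f$ is a $k$-antiresolving set.

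Next I would set up the comparison. Write $s = |S|$ and let $T$ be a $k$-antiresolving basis, so that $|T| = \adim_k(G)$ is minimum among all $k$-antiresolving sets. By the observation above, $f(T) = T$ is a $k$-antiresolving set, so the minimality of $S$ assumed in the statement forces $s \leq |T|$. This inequality is exactly what lets me extract a witness of the correct size: since $s \leq |T|$, I can pick some $S' \subseteq T$ with $|S'| = s$.

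I would then combine monotonicity with the hypothesis. By the second property of Theorem~\ref{theo_3}, $S' \subseteq T$ gives $f(S') \subseteq f(T) = T$, whence $|f(S')| \leq |T|$. Since $|S'| = s = |S|$, the hypothesis of the proposition applies to $S'$ and yields $|f(S)| \leq |f(S')| \leq |T|$. Because $f(S)$ is itself a $k$-antiresolving set while $|T|$ is the minimum cardinality of such a set, we also have $|f(S)| \geq |T|$; hence $|f(S)| = |T| = \adim_k(G)$, so $f(S)$ is a $k$-antiresolving basis.

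I expect the argument to be essentially obstacle-free once the fixed-point identity $f(T) = T$ is in hand; the only point requiring care is the direction of the chain of inequalities. One must apply the hypothesis to a subset $S'$ of a genuine basis $T$ — rather than to $S$ or to $f(S)$ — and then use monotonicity to push the size bound from $T$ down to $f(S')$ and thence, via the hypothesis, to $f(S)$.
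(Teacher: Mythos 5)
Your proof is correct. Note that the paper states Proposition~\ref{prop_2} without any proof --- it is invoked only to justify the modification leading to Algorithm~\ref{alg_antiresolving_basis} --- so there is no published argument to compare against; your argument fills that gap, and every ingredient it needs is indeed available in the paper. The fixed-point identity $f(T)=T$ for a $k$-antiresolving set $T$ follows, as you say, from Definition~\ref{def_antiresolving_set} (every class of $C_T$ has at least $k$ elements, so the set $Y$ in Equation~\ref{eq_1} is empty). This identity is what makes any $k$-antiresolving basis $T$ a competitor in the minimization defining $S$, so the assumed minimality of $|S|$ gives $|S|\le|T|$ and hence the existence of a witness $S'\subseteq T$ with $|S'|=|S|$; then property 2 of Theorem~\ref{theo_3} gives $f(S')\subseteq f(T)=T$, and the chain $|f(S)|\le|f(S')|\le|T|\le|f(S)|$ (the last inequality because $f(S)$ is a $k$-antiresolving set and $T$ a basis) closes the argument. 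One remark worth keeping: the hypothesis of the proposition quantifies over \emph{all} $S'$ of cardinality $|S|$, including those for which $f(S')$ is not a $k$-antiresolving set and may even equal $V(G)$; your proof correctly exploits only the instance $S'\subseteq T$, which is the only instance that matters.
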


\begin{algorithm}
\caption{Given a positive integer $k$, this
algorithms outputs: i) \texttt{true} if a
$k$-antiresolving basis is found, ii) $\texttt{false}$ if a $k$-antiresolving
basis does not
exist, iii) $\texttt{unknown}$ when neither a $k$-antiresolving basis nor a
proof that it does not exist was found.
\label{alg_antiresolving_basis}}
\begin{algorithmic}[1]
\Require A graph $G$, an integer value $m$ to control the exponential
explosion, and the integer value $k$.
\State Let $V(G) = \{v_1, \cdots, v_N\}$
\State Let $C_1 = \{f(\{v_1\}), \cdots, f(\{v_N\})\}$
\State Let $minSet = \min(|f(\{v_1\})|, \cdots, |f(\{v_N\})|)$
\If {$\exists S \in C_1$ such that $S$ is a $k$-antiresolving set and $|S|
\leq minSet$}
	\Return \texttt{true}
\EndIf
\For {$h = 2$ to $m$}
	\State Let $C_h$ be an empty set
	\For {$i = 1$ to $|C_{h-1}|$}
		\State Let $S_i$ be the $i$th element of $C_{h-1}$
		\For {$j = i+1$ to $|C_{h-1}|$}
			\State Let $S_j$ be the $j$th element of $C_{h-1}$
			\If {$S_i \not\subseteq S_j$ and  $S_j \not\subseteq S_i$}
				\State Add $f(S_i \bigcup S_j)$ to $C_h$
			\EndIf
		\EndFor
	\EndFor
	\State Let $minSet$ be the minimum cardinality of a set in $C_h$
	\If {$\exists S \in C_1 \cup \cdots \cup C_h$ such that $S$ is a
	$k$-antiresolving set and
	$|S| \leq minSet$}
		\Return \texttt{true}
	\EndIf
	\EndFor
\If {$\forall S \in C_m, S = V(G)$}
	\Return \texttt{false}
\Else {}
	\Return \texttt{unknown}
\EndIf
\end{algorithmic}
\end{algorithm}

\subsection{Empirical evaluation on synthetic graphs}

In order to show the feasibility of both Algorithm~\ref{alg_antiresolving} and
Algorithm~\ref{alg_antiresolving_basis}, we
ran
experiments
considering $m \in \{1, 2, 3\}$ and random
graphs as input. The aim of the experiments is to provide statistically sound
data on
the ratio between positive, negative, and unknown results of the
proposed algorithms. Further below in this section we also show results on
real-life
social graphs.

A random graph is created by choosing
integer values uniformly distributed in the interval $[k+2,100]$ as the
number of
vertices $N$; where $k$ is a privacy threshold. The number of edges also
distributes uniformly in the
interval
$[0,N\times(N-1)/2]$, and the edges are added randomly to the graph. For
each pair $(m,k) \in \{1, 2, 3\} \times \{1, 2, 3, 4, 5, 6, 7, 8\}$, we
created $10\ 000$ random graphs and executed Algorithm~\ref{alg_antiresolving}
and Algorithm~\ref{alg_antiresolving_basis} in order to look for a
$k$-antiresolving set and a $k$-antiresolving basis, respectively.

The success rate of both algorithms considering $m \in \{1,2,3\}$ is shown in
Figure~\ref{fig:rates}. We define a success as either a positive or a negative
result, \emph{i.e.,} whenever a $k$-antiresolving set (basis) or a proof that
it does
not exist is found. According to
Figure~\ref{fig:rates}, both algorithms perform poorly for $m = 1$. However,
when $m = 2$ they already achieve a success rate above $80 \%$, which
is further improved by the more
computationally demanding versions of Algorithm~\ref{alg_antiresolving} and
Algorithm~\ref{alg_antiresolving_basis} that consider $m = 3$.

The difference between Figure~\ref{figure:set} and
Figure~\ref{figure:basis} suggests, as expected, that finding a
$k$-antiresolving basis is harder than finding
a $k$-antiresolving set. Notwithstanding,
Algorithm~\ref{alg_antiresolving_basis} performs above $80 \%$ when $m = 2$ and
$m = 3$. It is also worth remarking that, even though Figure~\ref{fig:rates}
hints
that the success rate of both algorithms monotonically decreases with $k$, our
algorithms
have $100 \%$ of success rate if $k$ is equal to the order of the graph. This
is because $\forall u \in V(G)\,  \forall  X \in
C_u (|X|<|V(G)|)$ and, thus, all the nodes in the graph should be contained in a
$k$-antiresolving set according to Lemma~\ref{theo_1}.

\begin{figure*}
\hspace{-15pt}
\centering
  \subfigure[$k$-antiresolving set]{
       \includegraphics[width=0.35\textwidth,
       angle=270]{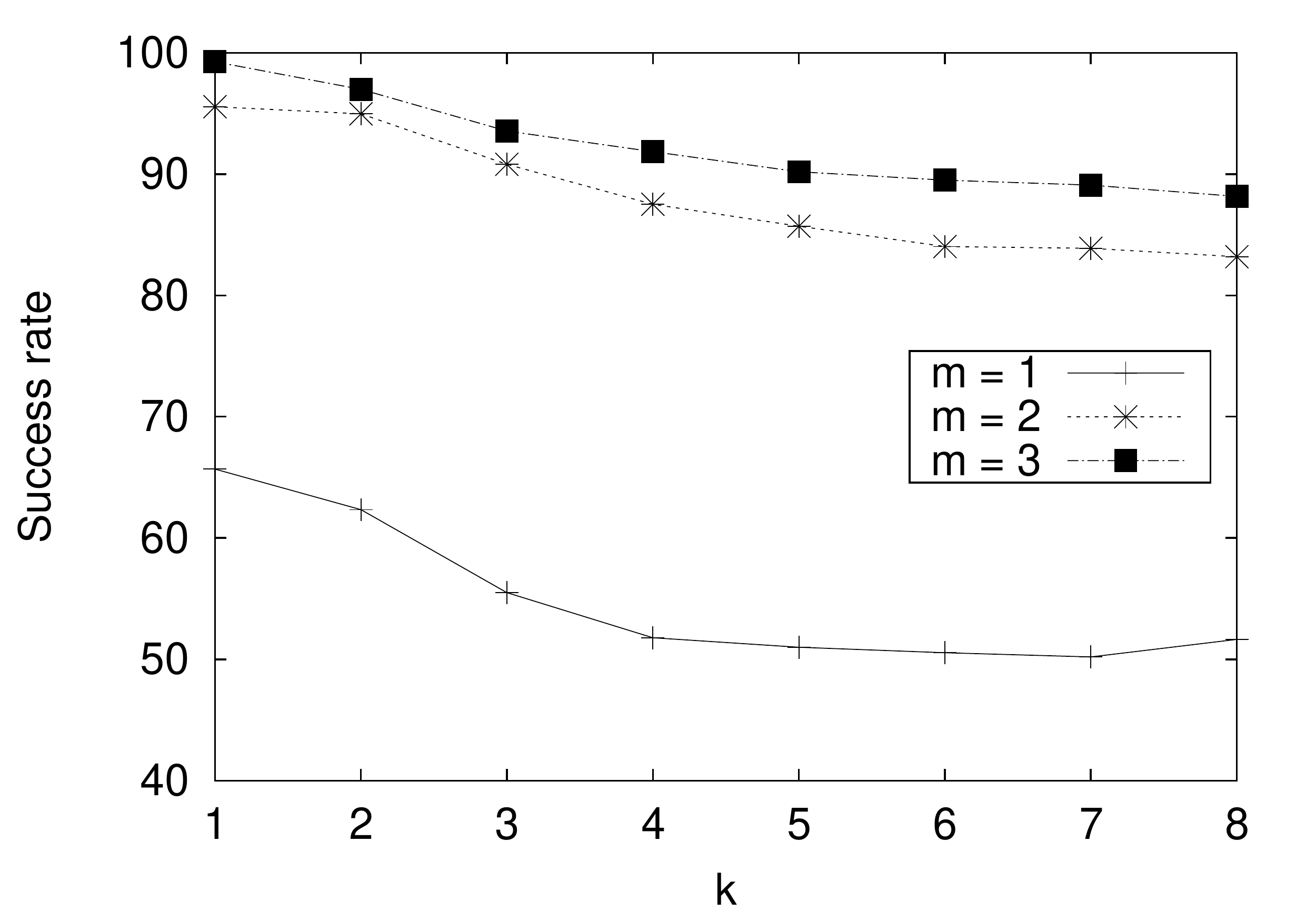}
  	\label{figure:set}
  }
\hspace{-15pt}
  \subfigure[$k$-antiresolving basis]{
       \includegraphics[width=0.35\textwidth,
       angle=270]{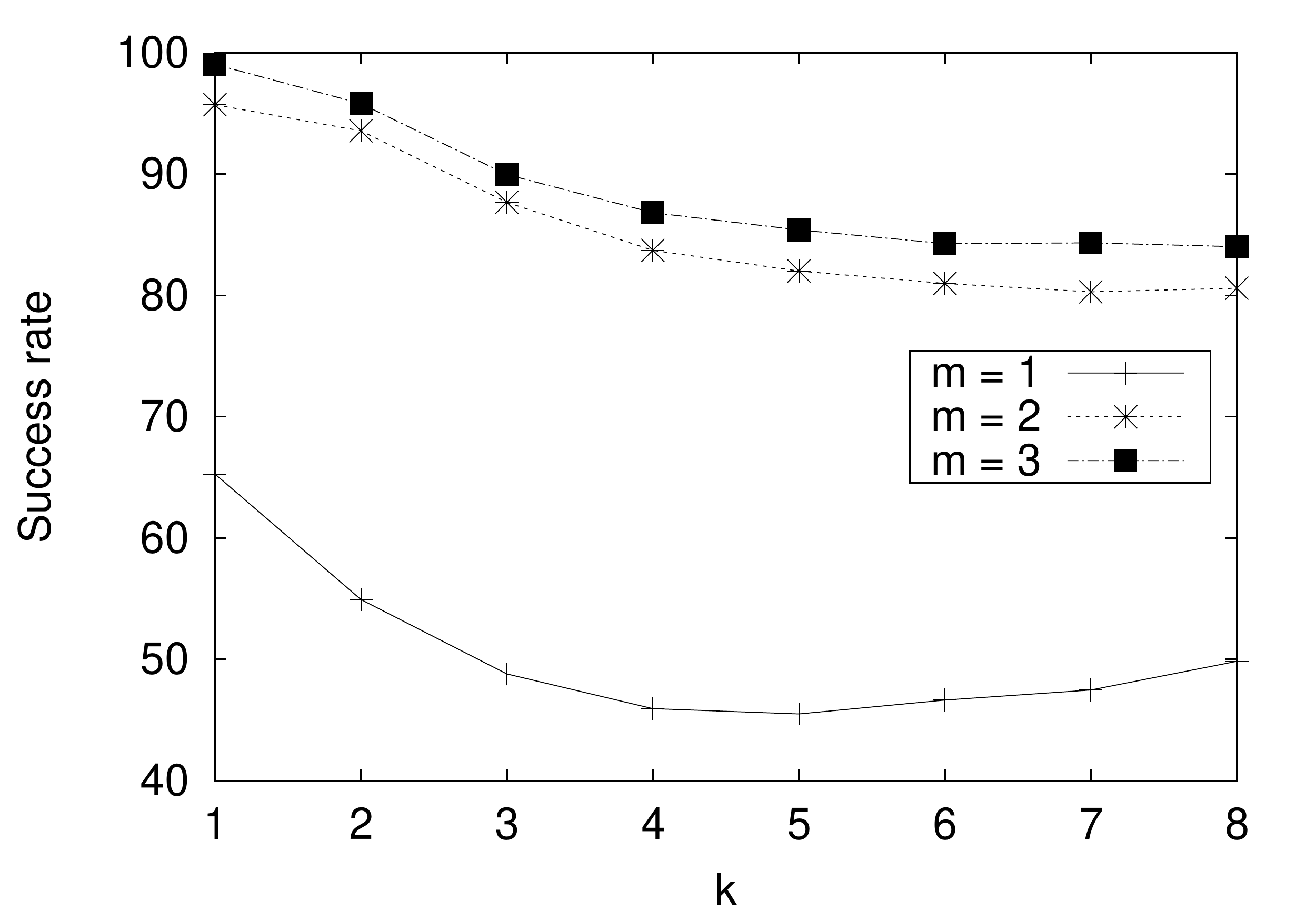}
  	\label{figure:basis}
  }
  \caption{Figure~\ref{figure:set} and  Figure~\ref{figure:basis} depicts the
  success rates of Algorithm~\ref{alg_antiresolving} and
  Algorithm~\ref{alg_antiresolving_basis}, respectively. The considered values
  of $m$ are
  $\{1,2,3\}$, while $k$ varies from $1$ to $8$.\label{fig:rates}}
\end{figure*}

In the previous section we provided a theoretical impossibility result whereby
a graph can be proven to not contain a $k$-antiresolving set. In
Figure~\ref{fig:ratio}, we
show that such impossibility result can be achieved by
random graphs; with small probability though. It seems also that the percentage
of negative results
monotonically increases with $k$. Indeed, it is easy to prove that this
percentage reaches
its minimum ($0 \%$) and maximum ($100 \%$) when $k$ takes its minimum ($k =
1$) and maximum ($k = N$) value, respectively. However, proving the
monotonicity of the percentage of negative results with respect to $k$ looks
challenging and cumbersome. Figure~\ref{fig:ratio} also shows that the increase
of the success rate of both algorithms when $m$ grows is due to an increase on
both the number of positive and negative results.

\begin{figure*}[htbp]
\centering
  \subfigure[t][$k$-antiresolving set ($m = 1$)]{
     \includegraphics[width=0.33\textwidth, angle = 270]{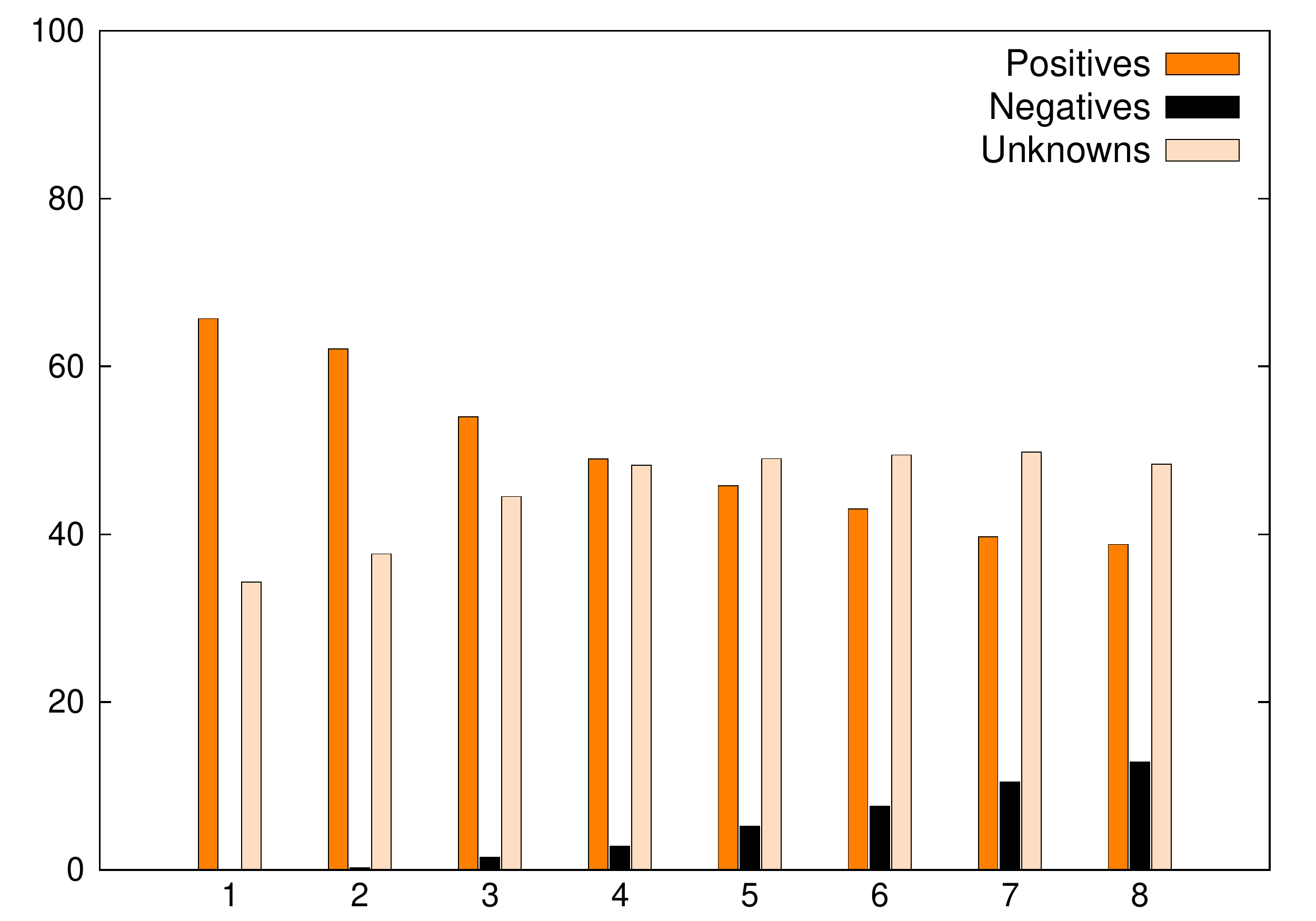}
	\label{figure:set_m_1}
  }
  \subfigure[t][$k$-antiresolving basis ($m = 1$)]{
     \includegraphics[width=0.33\textwidth, angle = 270]{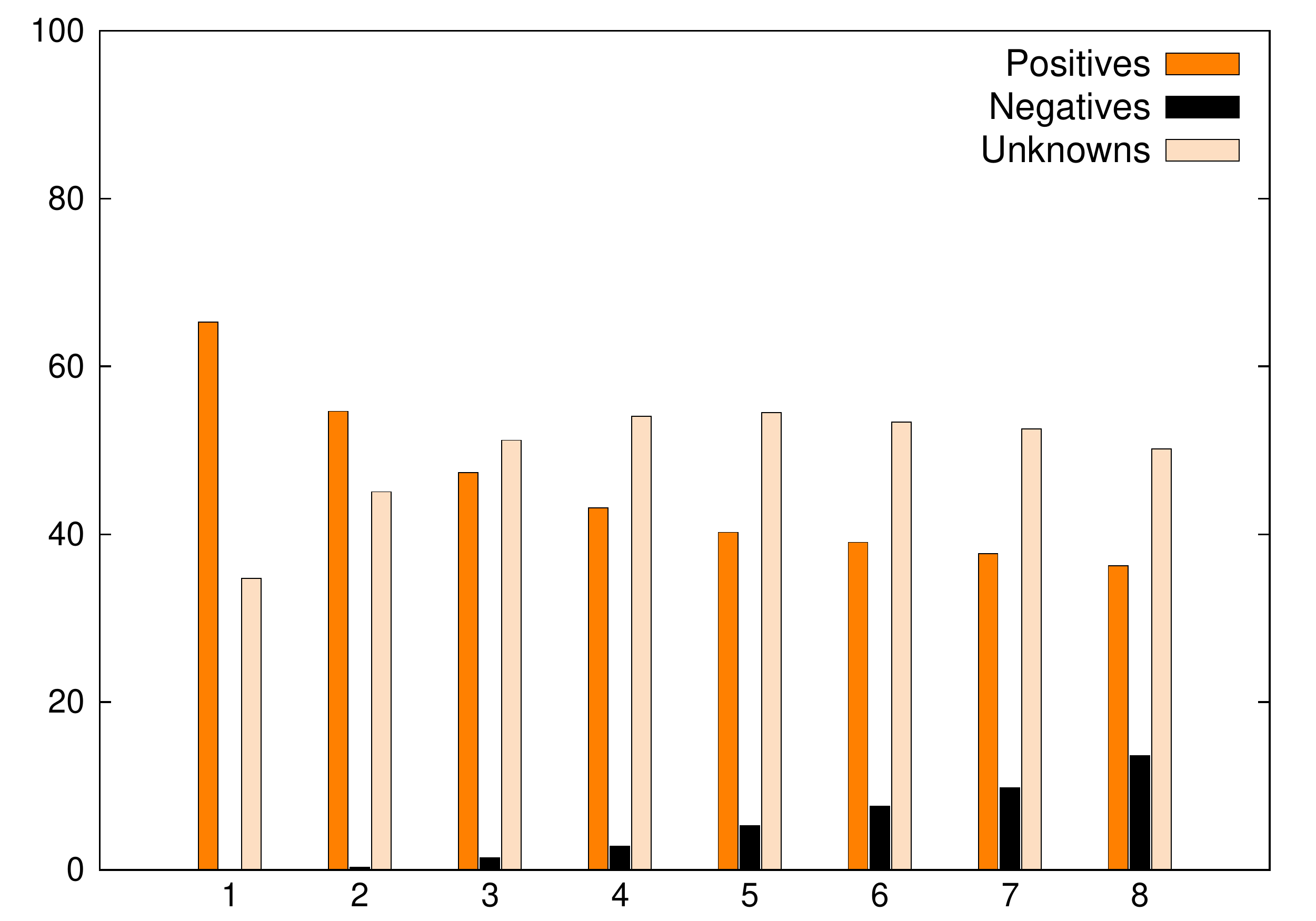}
  	\label{figure:basis_m_1}
  }
  \subfigure[t][$k$-antiresolving set ($m = 2$)]{
     \includegraphics[width=0.33\textwidth, angle = 270]{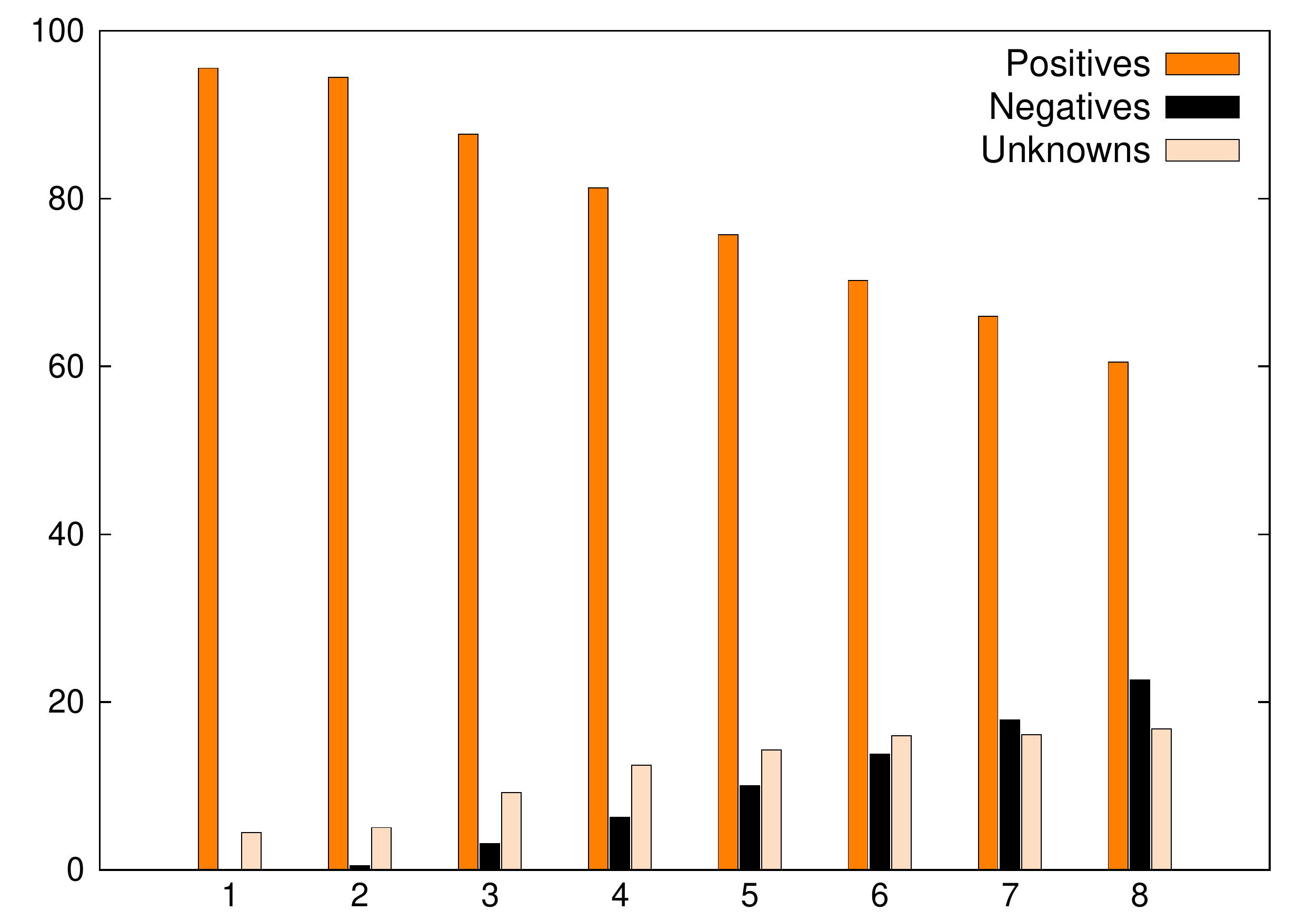}
	\label{figure:set_m_2}
  }
  \subfigure[t][$k$-antiresolving basis ($m = 2$)]{
     \includegraphics[width=0.33\textwidth, angle = 270]{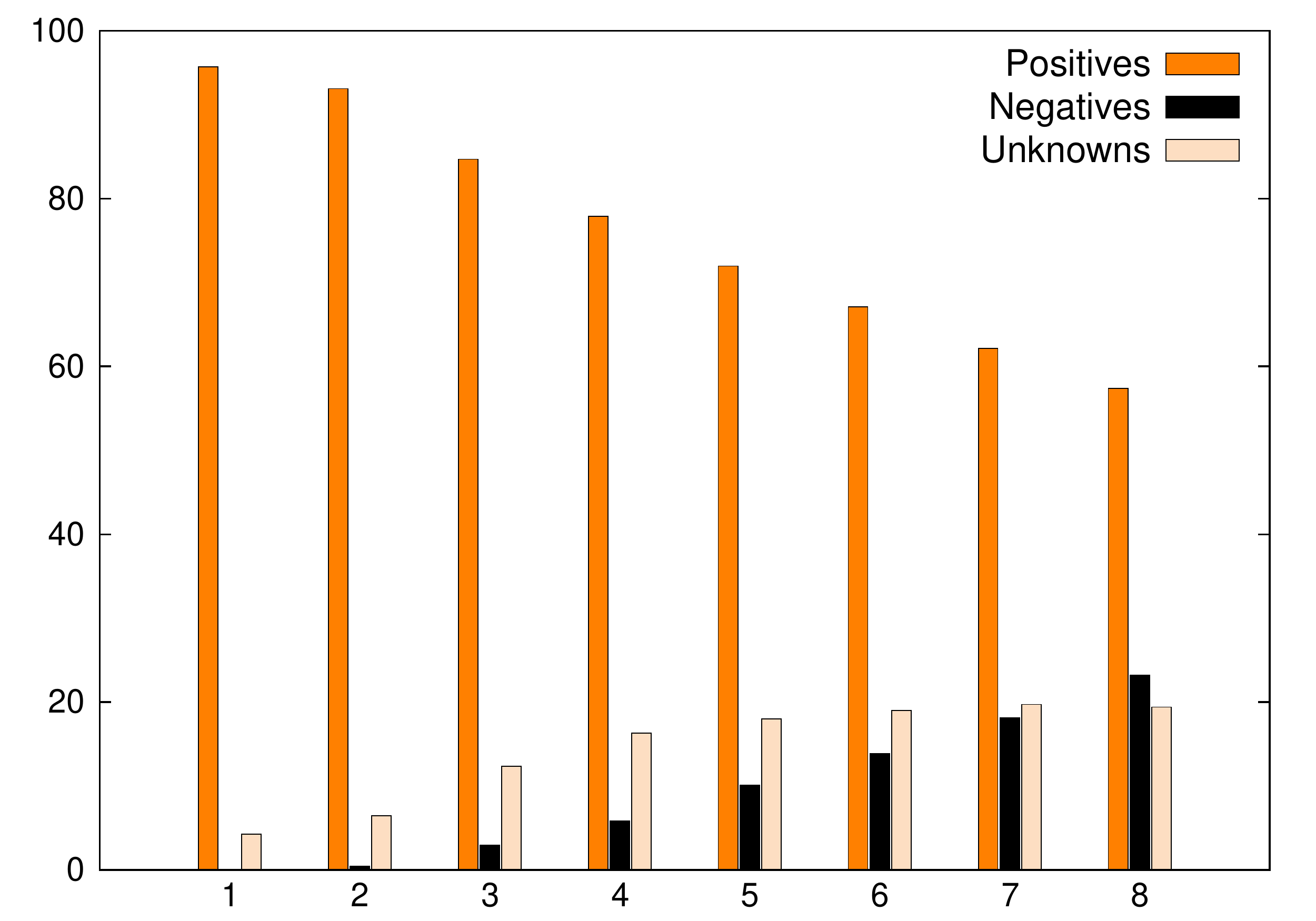}
  	\label{figure:basis_m_2}
  }
  \subfigure[t][$k$-antiresolving set ($m = 3$)]{
     \includegraphics[width=0.33\textwidth, angle = 270]{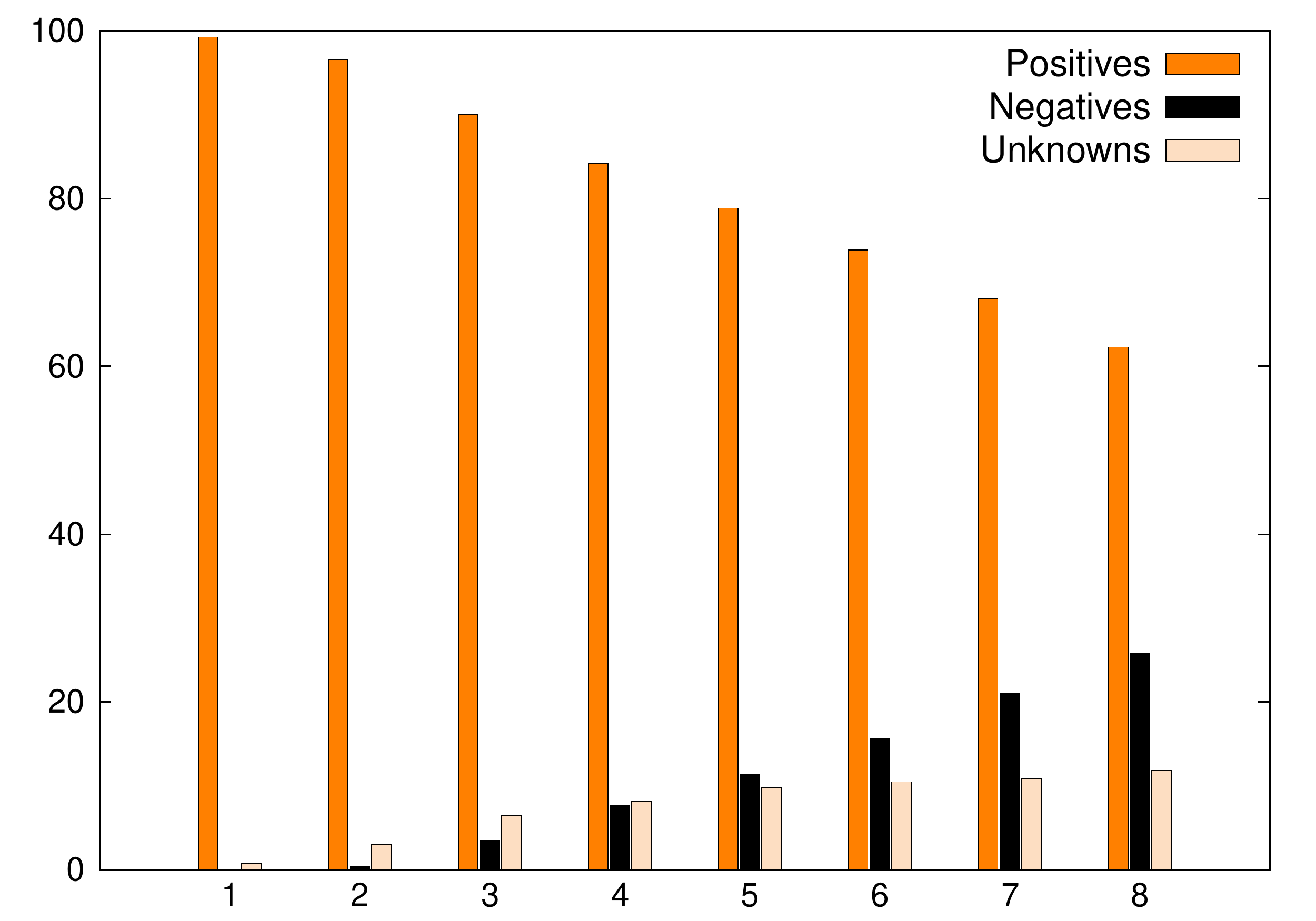}
	\label{figure:set_m_3}
  }
  \subfigure[t][$k$-antiresolving basis ($m = 3$)]{
     \includegraphics[width=0.33\textwidth, angle = 270]{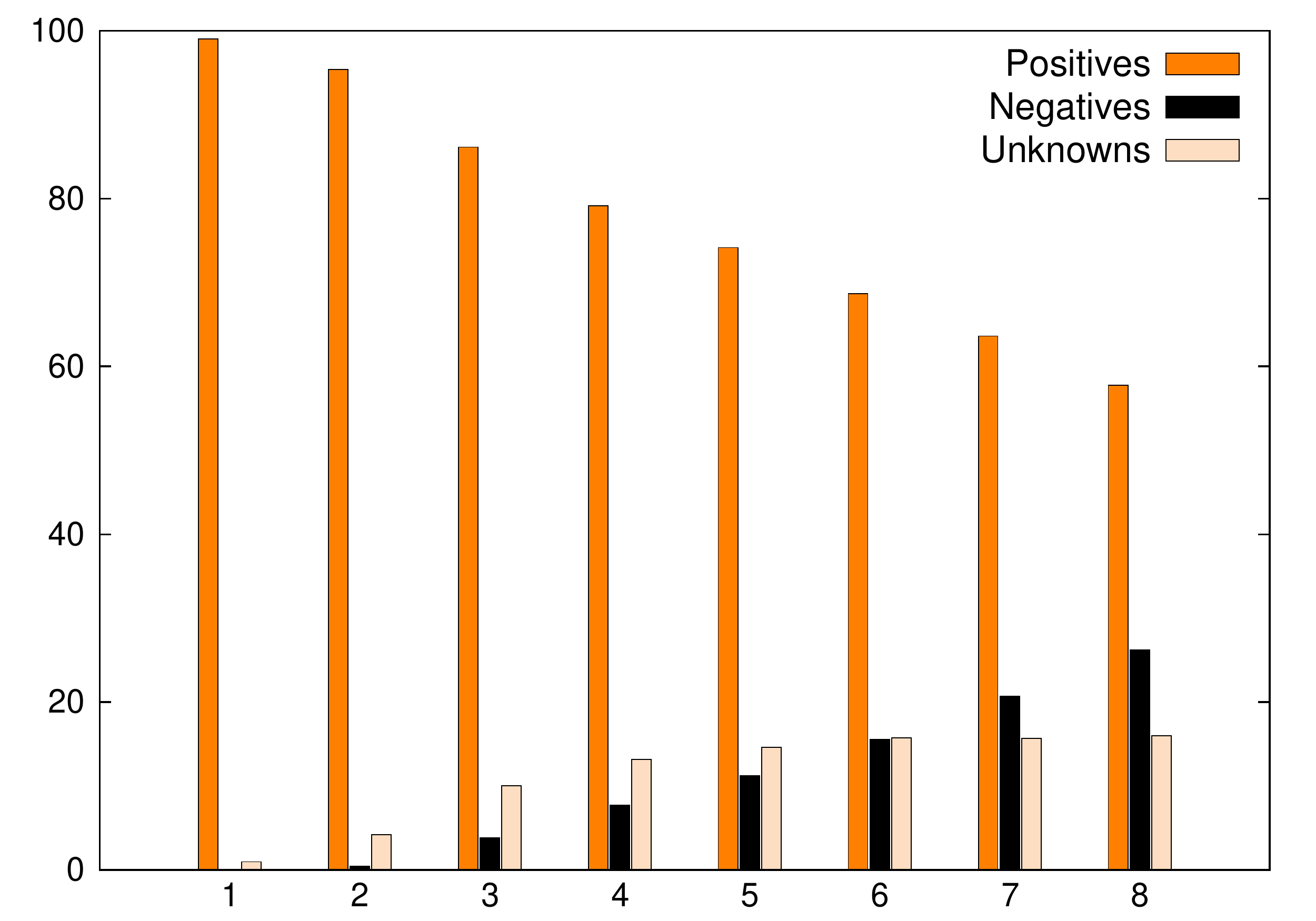}
  	\label{figure:basis_m_3}
  }
  \caption{Six charts showing the ratio of \texttt{true}, \texttt{false},
  and \texttt{unknown} results provided by Algorithm~\ref{alg_antiresolving}
  and Algorithm~\ref{alg_antiresolving_basis} on
  different values for $m \in \{1, 2, 3\}$. Charts at the left are
  devoted to the algorithm aimed at finding
  a $k$-antiresolving set, charts at the right consider the algorithm that
  looks
  for a  $k$-antiresolving basis.\label{fig:ratio}
}
\end{figure*}

\subsection{Empirical evaluation on real-life social graphs}

This section ends with the evaluation of two real-life social graphs with
respect to the proposed privacy measure. The first graph, named
\emph{Facebook graph} in what follows, consists of $10$ ego-networks from
Facebook~\cite{ML2009}. It
contains $4039$ users, $88234$ edges, and $193$ circles. The second graph
describes an online community of students at the University of
California~\cite{POC2009}. In
total, $1899$ students were registered in the network and $13838$ links were
created. We refer to this graph as \emph{Panzarasa graph}.

Both graphs have been analyzed in order to determine the values of $k$ and
$\ell$ such that they $(k, \ell)$-anonymity. Taking into account
the previously presented empirical results on synthetic data, we used for the
analyses Algorithm~\ref{alg_antiresolving_basis} with $m = 2$ as a good
trade-off between performance and success rate. The results are as follows.

The Panzarasa graph contains a
	$1$-antiresolving basis of size $1$. This
	means that this graph does not satisfy $(k,\ell)$-anonymity for $k > 1$
	unless 	$\ell < 1$, which is a meaningless scenario. Similarly, the
	$k$-metric
	antidimension of the Facebook graph is $1$
	for $k = 1$. Hence, it satisfies $(1,1)$-anonymity only;
	the lowest privacy guarantee with respect to our privacy measure.

Our results show that neither the Panzarasa nor the Facebook graph provide
privacy guarantees against active attack. This is not surprising since these
graphs have not been anonymized to prevent any type of structural
attack. Future work thus should be oriented to anonymization methods that
consider $(k,\ell)$-anonymity as a privacy goal.

\section{Mathematical properties on the $k$-metric
antidimension of graphs}\label{sec_metric_antidimension}

In the next two sections we provide some primary theoretical
results on the $k$-metric
antidimension problem. We focus on giving
mathematical properties that, supported by the results in
Section~\ref{sect-algo}, can determine or bound the $k$-metric antidimension
for some families of graphs. In particular in this section, we study the
$k$-metric antidimension of cycles, paths, complete bipartite graphs,  and
other graph families satisfying some specific conditions. To do so, we first
observe some basic properties of $k$-antiresolving sets, which will be further
used.

\begin{observation}\label{basic-prop}$\,$
\begin{enumerate}[{\rm (i)}]
	\item Any resolving set is also a $1$-antiresolving set.
	\item There does not exist $k > 1$ such that all the vertices of a graph
	form a $k$-antiresolving set.
    \item There does not exist any $n$-antiresolving set in a graph of order
    $n$.
	\item Not for every graph $G$ of order $n$ and every $k < n$, there
	exist a $k$-antiresolving set in $G$. For instance, if $G$ is a path
	graph, for $k\ge 3$ there does not exist a
	$k$-antiresolving set in $G$.	
\end{enumerate}
\end{observation}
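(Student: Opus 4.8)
The plan is to reduce all four parts to a single reformulation of Definition~\ref{def_antiresolving_set} in terms of equivalence-class sizes. Writing $[v]_{\sim_S}$ for the equivalence class of $v$ under $\sim_S$ inside $V-S$, the defining condition says exactly that $k$ is the largest integer with $|[v]_{\sim_S}| \ge k$ for every $v \in V-S$; equivalently, $S$ is a $k$-antiresolving set precisely when $k = \min_{v \in V-S} |[v]_{\sim_S}|$, the smallest cardinality of an equivalence class of $\sim_S$ on $V-S$. Throughout I would keep in mind that $S$ is nonempty, so that the metric representation is a genuine vector, and that this minimum is only defined when $V-S \neq \emptyset$. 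Once this reformulation is in place, the first three parts become almost immediate.

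For (i), if $S$ is a resolving set then by Definition~\ref{def:resolving} all vertices, and in particular all vertices of $V-S$, have pairwise distinct metric representations, so every class $[v]_{\sim_S}$ is a singleton and $\min_v |[v]_{\sim_S}| = 1$; hence $S$ is a $1$-antiresolving set, the degenerate case $V-S=\emptyset$ being precisely the situation treated in (ii). For (ii), taking $S=V$ gives $V-S=\emptyset$, so the condition in Definition~\ref{def_antiresolving_set} holds vacuously for \emph{every} positive integer $k$; no \emph{greatest} such $k$ then exists, so $V$ is not a $k$-antiresolving set, in particular for no $k>1$. For (iii), since $S\neq\emptyset$ we have $|V-S| = n-|S| \le n-1$, so no equivalence class of $\sim_S$ on $V-S$ can contain $n$ vertices; thus $\min_v |[v]_{\sim_S}| \le n-1 < n$ (or $V-S=\emptyset$, excluded as in (ii)), and no $n$-antiresolving set exists.

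The only part requiring real work is (iv), the path example. First I would fix any $v_i \in S$, which is possible because $S\neq\emptyset$. In $P_n$ one has $d(v_i,v_j)=|i-j|$, so the vertices at a prescribed distance $\delta\ge 1$ from $v_i$ are among $\{v_{i-\delta},\,v_{i+\delta}\}$; hence every equivalence class of $\sim_{\{v_i\}}$ on $V-\{v_i\}$ has at most two elements, namely a symmetric pair about $v_i$ or a lone vertex near an endpoint. Applying the first item of Proposition~\ref{prop_1} with $S'=\{v_i\}\subseteq S$, each class $[v]_{\sim_S}$ is contained in the corresponding class $[v]_{\sim_{\{v_i\}}}$, so $|[v]_{\sim_S}|\le 2$ for every $v\in V-S$. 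Therefore $\min_v |[v]_{\sim_S}| \le 2$, meaning $S$ is at most a $2$-antiresolving set and no $k$-antiresolving set with $k\ge 3$ exists in $P_n$. Since $P_4$ has order $4$ yet admits no $3$-antiresolving set although $3<4$, this same example witnesses the opening sentence of (iv).

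The main obstacle I anticipate is not the combinatorics but the coherent handling of the boundary cases built into the statement: pinning down the convention for $V-S=\emptyset$ used in (ii), and making explicit that $S=\emptyset$ is disallowed in (iii), since otherwise the empty set would vacuously behave like an $n$-antiresolving set and contradict the claim. For the path in (iv) the only subtlety is the count of equidistant vertices at the two ends, where a symmetric pair may degenerate to a single vertex, but this can only shrink a class and so does not weaken the bound $|[v]_{\sim_S}|\le 2$. With the class-size reformulation and Proposition~\ref{prop_1} in hand, I expect everything else to be routine.
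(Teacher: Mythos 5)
Your proof is correct. The paper states these four facts as an \emph{Observation} with no accompanying proof, so there is no authorial argument to diverge from; your reformulation of Definition~\ref{def_antiresolving_set} in terms of minimum equivalence-class size, and your use of the monotonicity in Proposition~\ref{prop_1} to bound the classes in a path by symmetric pairs $\{v_{i-\delta},v_{i+\delta}\}$ around a fixed $v_i\in S$, supply exactly the details the authors treat as evident, and all four parts go through. The only point worth flagging is one you already noticed: under your (literal and defensible) reading of the definition, $S=V$ satisfies the condition vacuously for every $k$, hence is not a $k$-antiresolving set for \emph{any} $k$; this makes (ii) trivial, but it also means that (i) as stated admits the exception $S=V$ (the full vertex set is a resolving set yet, on this convention, not a $1$-antiresolving set). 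That is a boundary-convention wrinkle in the paper's own statement rather than a gap in your argument, and your explicit handling of it, together with excluding $S=\emptyset$ in (iii), is more careful than the source.
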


In order to continue with our study we need to introduce some terminology and
notation. For a
graph $G$ and a vertex $v\in V(G)$, the set $N_G(v)=\{u\in V:\; uv\in E(G)\}$ is
the \emph{open neighborhood} of $v$ and the set $N_G[v] = N_G(v)\cup \{v\}$ is
the \emph{closed neighborhood} of $v$. Two vertices $x$, $y$ are called
(\emph{false}) \emph{true twins} if ($N_G(x) = N_G(y)$) $N_G[x] = N_G[y]$. In
this sense, a vertex $x$ is a \emph{twin} if there exists $y\ne x$ such that
they are either true or false twins. The diameter of $G$ is defined as
$D(G)=\max_{u,v\in V}\{d_G(u,v)\}$.

As mentioned before, not for every integer $k$ it is possible to find a
$k$-antiresolving set in a graph $G$. Thus, it is desirable to analyze first
the interval of suitable values for $k$ satisfying that $G$ contains at least
one $k$-antiresolving set. According to Definition \ref{def_antiresolving_set}
we present the following concept, which is relevant in the study of the
$k$-metric antidimension of graphs.

\begin{definition}[$k$-metric antidimensional graph]
A simple connected graph $G = (V, E)$ is $k$-\emph{metric antidimensional}, if
$k$ is the largest integer such that $G$ contains a $k$-antiresolving set.
\end{definition}

\subsection{$k$-metric antidimensional graphs}\label{subsect-met-antidim}

In order to study the $k$-metric antidimension of graphs, we first focus into
obtaining the values of
$k$ for which a given graph is $k$-metric antidimensional. First notice
that any graph $G$ is always $k$-metric antidimensional for some $k\ge 1$, and
a natural upper bound for $k$ which makes that $G$ would be $k$-metric
antidimensional is clearly the maximum degree of the graph, since the number of
vertices at distance one from any vertex is at most the maximum degree of the
graph.

\begin{observation}\label{remark-Delta}
If $G$ is a connected $k$-metric antidimensional graph of maximum degree
$\Delta$, then $1\le k\le \Delta$.
\end{observation}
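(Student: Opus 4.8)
The plan is to handle the two inequalities separately, with essentially all the work going into the upper bound. The lower bound $k\ge 1$ is immediate and was already noted before the statement: every connected graph admits a proper resolving set, which by Observation~\ref{basic-prop}(i) is a $1$-antiresolving set, so the largest $k$ for which a $k$-antiresolving set exists is at least $1$. For the upper bound it suffices to prove that \emph{every} $k$-antiresolving set $S$ satisfies $k\le\Delta$, because $G$ being $k$-metric antidimensional means exactly that $k$ is the maximal value attained over all such $S$.

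The first step is to reformulate the integer attached to a set $S$. Unwinding Definition~\ref{def_antiresolving_set}, the requirement that every $v\in V-S$ share its metric representation with at least $k-1$ further vertices says precisely that every equivalence class of $\sim_S$ on $V-S$ has size at least $k$; since $k$ is taken to be the greatest such integer, $k=\min_{X\in C_S}|X|$. Hence it is enough to exhibit, for an arbitrary $k$-antiresolving set $S$, a single class $X\in C_S$ with $|X|\le\Delta$. The key step then exploits connectivity. Such an $S$ is a nonempty proper subset of $V$: it cannot be empty, for otherwise $V-S$ would form a single class of size $n$, making $S$ an $n$-antiresolving set, which Observation~\ref{basic-prop}(iii) forbids; and it cannot equal $V$, since then $V-S=\emptyset$ and no equivalence class would exist. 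Connectedness therefore forces an edge across the cut, i.e.\ a vertex $w\in S$ with a neighbour $v_0\in V-S$, so that $d_G(v_0,w)=1$. Let $X\in C_S$ be the class of $v_0$. Every $v\in X$ has $r(v|S)=r(v_0|S)$, and in particular its coordinate at $w$ equals $d_G(v_0,w)=1$, whence $v\in N_G(w)$. Thus $X\subseteq N_G(w)$, giving $|X|\le\deg(w)\le\Delta$, and combining with the reformulation yields $k=\min_{Y\in C_S}|Y|\le|X|\le\Delta$.

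I expect the delicate part not to be the core inequality — which is really the remark preceding the statement, that a vertex touching $S$ is pinned to distance one from a fixed attacker node and $w$ has at most $\Delta$ neighbours — but the small amount of bookkeeping needed to justify the cut argument, namely confirming that a valid $k$-antiresolving set is a proper nonempty subset so that both $S$ and $V-S$ are nonempty. Once Observation~\ref{basic-prop}(iii) disposes of the degenerate sets, the remainder is a one-line distance argument.
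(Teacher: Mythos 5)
Your proof is correct and takes essentially the same approach as the paper: the paper justifies the upper bound with exactly your key observation, namely that a class of vertices pinned at distance one from a vertex of $S$ lies inside that vertex's neighbourhood and hence has size at most $\Delta$, while the lower bound is immediate from the definition. Your extra bookkeeping (using connectivity to produce an edge across the cut, and ruling out $S=\emptyset$ and $S=V(G)$ via Observation~\ref{basic-prop}) simply makes explicit the details the paper leaves implicit.
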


Since the maximum degree of a graph is at most the order of the graph minus
one, a particular case of the above result is the next one.

\begin{remark}
If $G$ is any connected $k$-metric antidimensional graph of order $n$, then
$1\le k\le n-1$.
Moreover, $G$ is $(n-1)$-metric antidimensional if and only if $G$ has maximum
degree $n-1$.
\end{remark}

\begin{proof}
The upper bound is a particular case of Remark \ref{remark-Delta}. Now, it is
straightforward to observe that if $v$ is a vertex of $G$ of degree
$n-1$, then for every vertex $u,w\ne v$ it follows that
$r(u|\{v\})=r(w|\{v\})=1$, \emph{i.e.}, every vertex
different from $v$ has the
same metric representation with respect to $\{v\}$. Thus, $\{v\}$ is a
$(n-1)$-antiresolving set, since there exists no $n$-antiresolving sets in $G$.
Thus, $G$ is $(n-1)$-metric antidimensional.

On the contrary, we assume that $G$ is $(n-1)$-metric antidimensional. Hence,
if $S$ is a $(n-1)$-antiresolving set, then $|S|=1$. Thus, the only possibility
is that $S$ is formed by a single vertex and that every vertex is adjacent to
it.
\end{proof}


To continue with our study we need some extra
notation. The \emph{eccentricity}
$\epsilon(v)$ of a vertex $v$ in a connected graph $G$
is the maximum length of a shortest path between $v$ and any other vertex $u$
of $G$. Notice that the maximum of the eccentricities of any vertex of $G$ is
the diameter of $G$ and the minimum of the eccentricities is the radius of $G$.
For more information on vertex eccentricity in graphs, see for instance
\cite{west-book}. Figure \ref{example-ecc} shows a graph $G$ and a table with
the eccentricities of all its vertices.
Given a vertex $u$ of a graph $G$, we consider the following local parameter.
For every $i\in \{1,...,\epsilon(u)\}$, let $d_i(u)=\{v\in
V(G)\,:\,d(v,u)=i\}$. Now, for every $u\in V(G)$, let
$$\phi(u)=\min_{1\le i\le \epsilon(u)}\{|d_i(u)|\}.$$
\label{phi-page}
and for any graph $G$, let $\phi(G)=\max_{v\in V(G)}\{\phi(v)\}$. The table and
the graph of Figure \ref{example-ecc} clarify the notation above.

\begin{figure}[ht]
\centering
\includegraphics[width=0.14\textwidth]{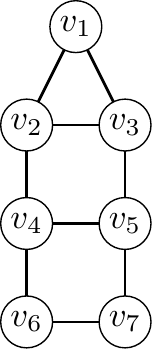}
\hspace*{1cm}
\includegraphics[width=0.7\textwidth]{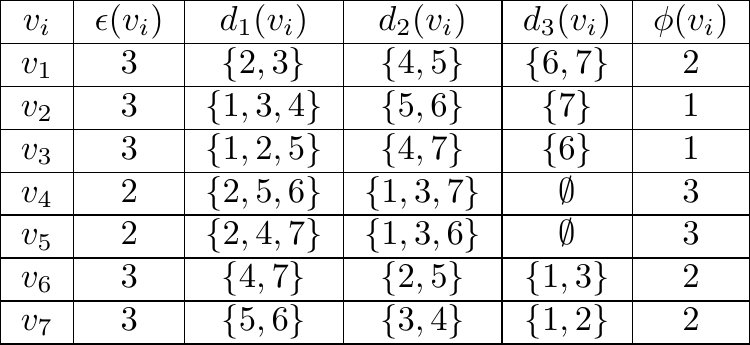}
\caption{Eccentricities of vertices of a graph $G$ and a table which shows that
$\phi(G)=3$.}
\label{example-ecc}
\end{figure}

\begin{theorem}\label{dimensional-phi}
Any connected graph $G$ is $k$-metric antidimensional for some $k\ge \phi(G)$.
\end{theorem}

\begin{proof}
Assume $x$ is a vertex of degree at least two in $G$ such that
$\phi(G)=\phi(x)$. Thus, for any vertex $y\ne x$, there exist at least
$\phi(x)-1$ vertices $v_1,v_2,....v_{\phi(G)-1}$ in $V(G)-\{x,y\}$ such that
$d(y,x)=d(v_1,x)=...=d(v_{\phi(T)-1},x)$. Moreover, since $\phi(G)=\phi(x)$,
there exists at least one vertex $y'$ such that there are exactly $\phi(G)-1$
different vertices satisfying the above mentioned. So, $\{x\}$ is a
$\phi(x)$-antiresolving set and $G$
is $k$-metric antidimensional for some $k\ge \phi(G)$.
\end{proof}


Now, notice that if a graph $G$ is a $1$-metric antidimensional, then every subset of vertices $S$ ought to be a $1$-antiresolving set, implying that $adim(G) = 1$. According to that fact, in this work we are mainly interested in those
graphs being $k$-metric antidimensional for some $k\ge 2$. An example of a
graph being $1$-metric antidimensional is for instance the path graph of even order.

\subsection{Graphs that are $k$-metric antidimensional for some $k\ge 2$}

To begin with the description of some families of graphs being
$k$-metric antidimensional for some $k\ge 2$ we define the \emph{radius} and the
\emph{center} of a graph as follows.
The \emph{radius} $r(G)$ of $G$ is the minimum eccentricity of any vertex in
$G$.
The \emph{center} of $G$ is the set $S$ of vertices of $G$ having eccentricity
equal to the radius of $G$.

\begin{remark}
If the center of a graph $G$ is only one vertex, then $G$ is $k$-metric
antidimensional for some $k\ge 2$.
\end{remark}

\begin{proof}
Let $v$ be the center of $G$. Hence, there exist two diametral
vertices $u,w$ such that $d_G(v,u)=d_G(v,w)=\epsilon(v)=r(G)$. Since $v$ has
eccentricity $r(G)$, there is no vertex $z\ne u,w$ in $G$ such that
$d_G(v,z)>d_G(v,u)=d_G(v,w)$. Thus, it follows that for
any vertex $x\ne v$ there exists at least a vertex $y$ belonging to the $u-w$
path such that
$d_G(x,v)=d_G(y,v)$. Therefore, $\{v\}$ is a $k$-antiresolving set in $G$ for
some $k\ge 2$.
\end{proof}

If a path graph has odd order, then its center is formed by only one vertex.
Also, for every vertex of any path, there exists at most other different
vertex having equal distance to a third vertex of the path. Thus, it is
clear the following consequence of the Remark above.

\begin{corollary}\label{paths}
If a path $P_n$ has odd order, then it is $2$-metric antidimensional.
\end{corollary}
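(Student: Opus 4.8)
The plan is to verify the two inequalities implicit in the definition of ``$k$-metric antidimensional'': that $P_n$ (with $n$ odd) admits a $2$-antiresolving set, so the extremal $k$ is at least $2$, and that $P_n$ admits no $k$-antiresolving set with $k \geq 3$, so the extremal $k$ is at most $2$. Together these force $k = 2$.

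For the lower bound I would invoke the Remark immediately preceding this corollary. Writing the path as $v_1 v_2 \cdots v_n$, the eccentricity of $v_i$ equals $\max(i-1,\, n-i)$, which for odd $n$ is minimized uniquely at $i = (n+1)/2$; hence the center of $P_n$ is the single vertex $v_{(n+1)/2}$, and the Remark yields that $P_n$ is $k$-metric antidimensional for some $k \geq 2$. If a self-contained argument is preferred, one checks directly that $S = \{v_{(n+1)/2}\}$ is a $2$-antiresolving set: using $d(v_i, v_j) = |i-j|$, each remaining vertex $v_{(n+1)/2 + i}$ lies at distance $i$ from the center and shares that distance with exactly one partner $v_{(n+1)/2 - i}$, so every equivalence class of $\sim_{v_{(n+1)/2}}$ has size exactly $2$.

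For the upper bound the governing structural fact --- already signposted in the paragraph before the statement --- is that on a path at most two vertices lie at any fixed distance from a given reference vertex: the vertices at distance $d$ from $v_j$ are precisely $v_{j-d}$ and $v_{j+d}$ whenever these indices fall in $\{1,\dots,n\}$. Thus for any single vertex $w$ every class of $C_w$ has at most two elements. To extend this bound to an arbitrary attacker set $S$, I would apply Proposition~\ref{prop_1} with $S' = \{w\}$ for some $w \in S$: every class $X \in C_S$ is contained in some class $X' \in C_w$, whence $|X| \leq |X'| \leq 2$. Since by Definition~\ref{def_antiresolving_set} the integer $k$ associated with $S$ equals the minimum class size of $C_S$, it follows that $k \leq 2$ for every antiresolving set, i.e.\ no $k$-antiresolving set exists for $k \geq 3$. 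This is exactly Observation~\ref{basic-prop}(iv) specialized to paths, which may be cited directly instead.

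I do not expect a genuine obstacle: each half reduces either to a result stated earlier (the preceding Remark, Proposition~\ref{prop_1}, Observation~\ref{basic-prop}(iv)) or to the elementary distance formula $d(v_i,v_j) = |i-j|$. The only place demanding a little care is the endpoint bookkeeping in the upper-bound count --- confirming that the ``at most two vertices at a prescribed distance'' tally is not violated near $v_1$ or $v_n$ (in fact it can only drop to one there) --- but this is routine.
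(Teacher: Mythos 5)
Your proposal is correct and takes essentially the same route as the paper: the lower bound is obtained by invoking the singleton-center Remark preceding the corollary, and the upper bound rests on the fact that on a path at most two vertices are equidistant from any given vertex, which is exactly Observation~\ref{basic-prop}(iv). Your appeal to Proposition~\ref{prop_1} merely makes explicit a step the paper leaves informal, so there is no substantive difference between the two arguments.
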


Another example of $2$-metric antidimensional are the cycle graphs as we next
see.

\begin{remark}\label{cycles}
Any cycle graph $C_n$ is $2$-metric antidimensional.
\end{remark}

\begin{proof}
We assume first that $n$ is odd. Let $v$ be any vertex of $C_n$. Hence, for any
vertex $x\ne v$ of $C_n$, there exists only one $y\ne x,v$ such that
$d_{C_n}(x,v)=d_{C_n}(y,v)$. Thus, $\{v\}$ is a $2$-antiresolving set in $C_n$.
Assume now that $n$ is even and let $\{u,w\}$ be any two diametral vertices of
$C_n$. We observe that for any vertex $x\ne u,w$ of
$C_n$, there exists only one $y\ne x,u,w$ such that $d_{C_n}(x,u)=d_{C_n}(y,u)$
and
$d_{C_n}(x,w)=d_{C_n}(y,w)$. Thus, $\{u,w\}$ is a $2$-antiresolving set in
$C_n$.

On the other hand, there does not exists $k>2$ such that $C_n$ contains a
$k$-metric
antiresolving set, since for any vertex of $C_n$, there exists at most another
different vertex having equal distance to a third vertex of the path. Therefore,
$C_n$ is $2$-metric antidimensional.
\end{proof}

If the vertices of a set $S$ are pairwise twins in a graph $G$, then it is
clear that they have the same distance to every other vertex $x\notin S$.
So, $V(G)-S$ is a $|S|$-antiresolving set for $G$. Hence, the following result.

\begin{observation}\label{obs:twin-vertices}
If the vertices of a set $S$ are pairwise twins in a graph $G$, then $G$
is $k$-metric antidimensional for some $k\ge |S|$.
\end{observation}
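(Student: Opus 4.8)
The plan is to exhibit one explicit antiresolving set whose parameter is at least $|S|$, namely the complement $V(G)-S$, and then invoke the definition of $k$-metric antidimensional graph. The core of the argument is the equidistance property stated just before the observation: any two twins have the same distance to every third vertex. Once that is secured, everything else is bookkeeping over the definitions.

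First I would establish the equidistance property by an automorphism argument. Given two twins $x,y$ (whether true or false), I would check that in both cases they have exactly the same neighbours among $V(G)-\{x,y\}$: for false twins this is immediate from $N_G(x)=N_G(y)$, while for true twins $N_G[x]=N_G[y]$ forces $x$ and $y$ to be adjacent and to share every other neighbour. Consequently the transposition $\sigma$ that swaps $x$ and $y$ and fixes all remaining vertices maps edges to edges, i.e.\ it is an automorphism of $G$. Since automorphisms preserve distances, $d_G(x,z)=d_G(\sigma(x),\sigma(z))=d_G(y,z)$ for every $z\notin\{x,y\}$.

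Next I would propagate this to the whole of $S$. Because the vertices of $S$ are pairwise twins, for each fixed $z\in V(G)-S$ and each pair $x,y\in S$ we have $d_G(x,z)=d_G(y,z)$; hence all vertices of $S$ lie at a common distance from $z$. Reading this across all $z\in V(G)-S$ shows that $r(x\mid V(G)-S)$ is the same vector for every $x\in S$, so the entire set $S$ collapses into a single equivalence class under $\sim_{V(G)-S}$. Note that transitivity of the twin relation is not needed here, since the hypothesis supplies equidistance for every pair directly.

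Finally I would translate this into antidimension language. Writing $S'=V(G)-S$, every vertex $v\in V(G)-S'=S$ shares its metric representation with the remaining $|S|-1$ vertices of $S$, so each equivalence class of $\sim_{S'}$ has size at least $|S|$; by Definition~\ref{def_antiresolving_set} the set $S'$ is therefore a $k'$-antiresolving set for some $k'\ge|S|$. Since $G$ is $k$-metric antidimensional with $k$ the largest integer admitting a $k$-antiresolving set, we conclude $k\ge k'\ge|S|$. I expect the only delicate point to be the uniform treatment of the true-twin and false-twin cases when verifying that $\sigma$ is an automorphism; after that, the passage to the antiresolving parameter is routine.
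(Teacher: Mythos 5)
Your proposal is correct and follows essentially the same route as the paper: exhibit the complement $V(G)-S$ as the antiresolving set, using the fact that pairwise twins are equidistant from every vertex outside $S$, and then read off $k\ge|S|$ from the definition of $k$-metric antidimensional. The only difference is that you rigorously justify the equidistance step via the swap automorphism, a step the paper dismisses as ``clear.''
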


Complete bipartite graph\footnote{A graph $G$ is complete bipartite if its
vertex set can be divided into two disjoint sets $U$ and $V$ such that every
vertex in $U$ is adjacent to every vertex in $V$ and no more.} are special kind
of graphs, since they have a bipartition of the vertex set in which all the
vertices belonging to one of the sets of the bipartition are pairwise twin
vertices.
Let $K_{r,t}$ be a complete bipartite graph. Next we analyze the suitable
values $k$ making a complete bipartite graph $k$-metric antidimensional.

\begin{remark}\label{bipartite}
Any complete bipartite graph $K_{r,t}$ with $r\ge t$ is $r$-metric
antidimensional.
\end{remark}

\begin{proof}
Let $U$ and $V$ be the two disjoint sets of $K_{r,t}$ with $|U|=r$ and $|V|=t$.
Notice that $U$ (respectively $V$) is a set of pairwise twin vertices. Thus, by
Observation \ref{obs:twin-vertices} we have that $K_{r,t}$ is $k$-metric
antidimensional
for some $k\ge |U|=r$. Suppose that $k\ge r+1$ and let $S$ be a
$k$-antiresolving set for $K_{r,t}$. Since every vertex of $K_{r,t}$ has
distance either one or two to any other vertex of $K_{r,t}$ it is not possible
to find $k-1$ vertices having the same distance to every vertex of $S$, a
contradiction. So, $k=r$ and the proof is complete.
\end{proof}

\subsection{The $k$-metric antidimension of
graphs}\label{subsect-metric-antidim}

In this subsection we compute the $k'$-metric antidimension of some graphs which
were already described to be $k$-metric antidimensional for some value $k\ge
k'$. It is clear that the first natural bound which follows for the $k$-metric
antidimension of a graph of order $n$ is $\adim_k(G)\le n-k$. Such a bound is
tight. It is achieved, for instance, for the complete bipartite graphs
$K_{r,t}$ as we can see at next by taking the case $t< k\le r$.

\begin{proposition}
Let $r,t$ be two positive integers with $r\ge t$.
\begin{enumerate}
\item If $t< k\le r$, then $\adim_k(K_{r,t})=r+t-k$.
\item If $1< k\le t$, then $\adim_k(K_{r,t})=r+t-2k$.
\end{enumerate}
\end{proposition}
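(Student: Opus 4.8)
The plan is to reduce the whole computation to counting the representation classes that a set $S$ induces, and then to minimise $|S|$ in each regime of $k$. Fix the bipartition $U\cup V$ with $|U|=r\ge t=|V|$ and, for a candidate $S$, set $a=|S\cap U|$ and $b=|S\cap V|$. Since any two vertices of $U$ (respectively of $V$) are twins, every vertex of $U\setminus S$ has the same metric representation with respect to $S$ — the vector carrying a $2$ on each coordinate coming from $S\cap U$ and a $1$ on each coordinate coming from $S\cap V$ — and symmetrically every vertex of $V\setminus S$ shares a single representation. These two vectors coincide only when $S=\emptyset$, so for any nonempty $S$ the set $V(G)-S$ splits into at most two equivalence classes, of sizes $r-a$ and $t-b$. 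Hence $S$ is a $k$-antiresolving set exactly when the smaller nonempty of these two classes has cardinality $k$; this single structural fact, already implicit in Observation~\ref{obs:twin-vertices} and Remark~\ref{bipartite}, drives both parts.

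For part~(1), where $t<k\le r$, I would argue the lower bound first. Since $|V\setminus S|=t-b\le t<k$, the class $V\setminus S$ can never reach size $k$; a $k$-antiresolving set is therefore forced to absorb all of $V$, i.e. $b=t$, leaving $U\setminus S$ as the unique class. Its size must then equal $k$, so $a=r-k$, which pins down $|S|=(r-k)+t=r+t-k$ and at the same time exhibits the extremal set. Thus $\adim_k(K_{r,t})=r+t-k$.

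For part~(2), where $1<k\le t$, the upper bound is immediate: removing $r-k$ vertices of $U$ and $t-k$ vertices of $V$ leaves both surviving classes of size exactly $k$, so that set is $k$-antiresolving and $\adim_k(K_{r,t})\le r+t-2k$. The real content is the matching lower bound, for which I would aim to show that a $k$-antiresolving set is obliged to keep both representation classes present and each of size exactly $k$, forcing $r-a=t-b=k$ and hence $|S|\ge r+t-2k$.

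The step I expect to be the main obstacle is precisely this lower bound for part~(2). In part~(1) the constraint $k>t$ collapses one of the two classes and makes the extremal configuration essentially unique, but when $1<k\le t$ both sides of the bipartition can host a class of size at least $k$, so one must carefully rule out every configuration in which vertices are trimmed from a single side while the other side is left intact. Controlling this case analysis — tracking, for each deleted vertex, which side of the bipartition it lies in and how its removal moves the two class sizes — is the delicate point on which the identity $\adim_k(K_{r,t})=r+t-2k$ ultimately rests.
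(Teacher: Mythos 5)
Your reduction to representation classes is sound, and your part~(1) is correct and complete: you show directly that any $k$-antiresolving set must contain all of $V$ and exactly $r-k$ vertices of $U$, which is essentially the paper's argument (the paper gets the lower bound by contradiction, your forcing version is if anything cleaner). Your upper bound in part~(2) is also the paper's construction. The genuine gap is exactly where you say it is: the matching lower bound $\adim_k(K_{r,t})\ge r+t-2k$ for $1<k\le t$ is never proved in your proposal, only announced as a goal, and without it part~(2) is not established.

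You should know, however, that this gap cannot be closed, because that lower bound is false, and the configurations you flag as the ``delicate point'' --- trimming vertices from one side while leaving the other side intact --- are precisely the counterexamples. Take $K_{5,3}$ with $k=2$ and let $S=\{v\}$ for a single vertex $v$ of the small side $V$. The complement splits into the class $U$ (five vertices, each with representation $(1)$) and the class $V-\{v\}$ (two vertices, each with representation $(2)$); the minimum class size is $2$, so $S$ is a $2$-antiresolving set and $\adim_2(K_{5,3})=1$, not $5+3-4=4$ as the proposition claims. In general, for $1<k<t$ any subset of $V$ of cardinality $t-k$ is a $k$-antiresolving set (the classes have sizes $r>k$ and exactly $k$), so $\adim_k(K_{r,t})\le t-k<r+t-2k$; and for $k=t<r$ a single vertex of $U$ already gives $\adim_t(K_{r,t})=1$. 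The paper's own proof of the lower bound fails at precisely the step you were worried about: it asserts that if more than $k$ vertices of $U$ (or of $V$) lie outside a candidate set $Q'$, then $Q'$ must be a $k'$-antiresolving set for some $k'\ge k+1$. That ignores the fact, central to your own first paragraph, that the parameter $k$ of an antiresolving set is the \emph{minimum} of the class sizes, so one class may exceed $k$ while the other has size exactly $k$. Part~(1) is immune to this defect because $k>t$ forces the $V$-side class to be empty, which is why your argument there goes through; for part~(2), your inability to supply the lower bound reflects an error in the paper's statement and proof, not a flaw in your approach.
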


\begin{proof}
From Proposition \ref{bipartite} we know that $K_{r,t}$ is a $r$-metric
antidimensional graph. Let $U$ and $V$ be the two partite sets of $K_{r,t}$
with $|U|=r$ and $|V|=t$. We assume first that $t< k\le r$. Let $A\subseteq U$
with $|A|=k$ and let be $S=(V\cup U)-A$. Notice that if $k=r$, then $A=U$ and
so, $S=V$. Since any vertex $v\notin S$ (or equivalently $v\in A$) is adjacent
to every vertex of $V$ and it has distance two to every vertex in $U-A$, we
have that all the vertices of $A$ have the same metric representation with
respect to $S$. As $|A|=k$, it follows that $S$ is a $k$-antiresolving set and
$\adim_k(K_{r,t})\le r+t-k$. Now, suppose $\adim_k(K_{r,t})<r+t-k$ and let $S'$
be a
$k$-antiresolving set for $K_{r,t}$. So, we have either one of the following
situations.
\begin{itemize}
\item There exist more than $k$ vertices of $U$ not in $S'$. Hence, for any
vertex $u\in U-S'$ there exist at least $k$ vertices not in $S'$ which,
together with $u$, have the same metric representation with respect to $S'$.
So, $S'$ is not a $k$-antiresolving set, but a $k'$-antiresolving set for some
$k'\ge k+1$, a contradiction.
\item There exists at least one vertex of $V$ not in $S'$. It is a direct
contradiction, since $|V|=t<k$.
\end{itemize}
Therefore, we obtain that $\adim_k(K_{r,t})=r+t-k$.

On the other hand, we assume that $1< k\le r$. Let $X\subseteq U$ with $|X|=k$,
let $Y\subseteq V$ with $|Y|=k$ and let $Q=(V-Y)\cup (U-X)$. Hence,
for any vertex $v\notin Q$ (or equivalently $v\in X\cup Y$), there exist
exactly $k-1$ vertices, such that all of them, together with $v$, have the same
metric
representation with respect to $Q$. Thus, $Q$ is a $k$-antiresolving set and
$\adim_k(K_{r,t})\le r+t-2k$.

Now, suppose that $\adim_k(G)<r+t-2k$ and let $Q'$ be a $k$-antiresolving set
in $K_{r,t}$. Hence, either there exist more than $k$ vertices of $U$ not in
$Q'$ or there exist more than $k$ vertices of $V$ not in $Q'$. As above, in any
of both possibilities we obtain that $Q'$ is not $k$-antiresolving set, but a
$k'$-antiresolving set for some $k'\ge k+1$, a contradiction.
As a consequence, we obtain that $\adim_k(K_{r,t})= r+t-2k$.
\end{proof}

Next we study the $k$-metric antidimension of some other families of basic
graphs. According to Remark \ref{cycles} we know that the cycles $C_n$ are
$2$-metric antidimensional and, by Corollary \ref{paths}, that the paths $P_n$
are $2$-metric antidimensional only in the case $n$ is odd. Next we compute its
$2$-metric antidimension.

\begin{proposition}
Let $n\ge 2$ be an integer. Then
$$\adim_2(P_{2n+1})=1\;\;\mbox{and}\;\;\adim_2(C_n)=\left\{\begin{array}{ll}
                                               1, & \mbox{if $n$ is odd,} \\
                                               2, & \mbox{if $n$ is even.}
                                             \end{array}\right.
$$
\end{proposition}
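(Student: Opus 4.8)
The plan is to treat the three claimed equalities separately, observing that each of the upper bounds is essentially supplied by the earlier results, so that only the lower bound for even cycles requires a genuine argument.

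First, for the odd path $P_{2n+1}$ I would use the center construction from Corollary~\ref{paths}: the center of $P_{2n+1}$ is a single vertex $v$, and the argument there shows $\{v\}$ is a $k$-antiresolving set for some $k\geq 2$. Labelling the path $v_0v_1\cdots v_{2n}$ with $v=v_n$, the representation $r(v_i\mid\{v\})=|i-n|$ attains each value $j\in\{1,\dots,n\}$ exactly twice, at $v_{n-j}$ and $v_{n+j}$; hence every equivalence class in $V-\{v\}$ has size exactly $2$, so $\{v\}$ is in fact a $2$-antiresolving set. Since any $2$-antiresolving set has cardinality at least one, this forces $\adim_2(P_{2n+1})=1$.

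For the odd cycle $C_n$, Remark~\ref{cycles} already shows that any single vertex $v$ is a $2$-antiresolving set: labelling $C_n$ by $\mathbb{Z}_n$ and taking $v=0$, each distance value $j\in\{1,\dots,(n-1)/2\}$ is attained by exactly the two vertices $j$ and $n-j$, so every class in $V-\{v\}$ has size $2$. Thus $\adim_2(C_n)=1$ when $n$ is odd. For the even cycle the upper bound reuses the diametral pair $\{u,w\}$ from Remark~\ref{cycles}: labelling $C_n$ by $\mathbb{Z}_n$ with $u=0$ and $w=n/2$, the reflection $i\mapsto n-i$ maps each vertex to a distinct vertex sharing its distances to both $u$ and $w$, and its only fixed points are $u$ and $w$ themselves. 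Hence every class in $V-\{u,w\}$ has size exactly $2$, so $\{u,w\}$ is a $2$-antiresolving set and $\adim_2(C_n)\leq 2$.

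The main obstacle, and the only step needing care, is the matching lower bound $\adim_2(C_n)\geq 2$ for even $n$, that is, ruling out any singleton $2$-antiresolving set. Here I would invoke the vertex-transitivity of $C_n$ to reduce to $S=\{0\}$: the antipodal vertex $n/2$ is the unique vertex at distance $n/2$ from $0$, so its metric representation with respect to $\{0\}$ is shared by no other vertex. Consequently the largest $k$ for which $\{0\}$ is a $k$-antiresolving set is $k=1$, so no singleton can be a $2$-antiresolving set and $\adim_2(C_n)\geq 2$. Combining the two bounds gives $\adim_2(C_n)=2$ for even $n$, completing the proof.
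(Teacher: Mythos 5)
Your proposal is correct and follows essentially the same route as the paper: the center of the odd path and a single vertex of the odd cycle give singleton $2$-antiresolving sets, a diametral pair gives the upper bound $\adim_2(C_n)\le 2$ for even $n$, and the lower bound comes from observing that the vertex antipodal to any single vertex $h$ has a unique metric representation with respect to $\{h\}$, so no singleton can be $2$-antiresolving. Your explicit labellings, the reflection map, and the appeal to vertex-transitivity merely formalize what the paper states in words; the substance is identical.
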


\begin{proof}
If $v$ is the center of a path $P_{2n+1}$, then for any
other vertex $u\ne v$ there exists exactly one vertex $w\ne v,u$ such that
$w,u$ have the same metric representation with respect to
$\{v\}$. Thus $\adim_2(P_{2n+1})=1$.

Suppose $n$ is even and let $u,w$ be two diametral vertices in $C_n$. We
observe that for any vertex $x\ne u,w$ there exists exactly one vertex $y\ne
x,u,w$  in $C_n$, such that $x,y$ have the same metric
representation with respect to $\{u,w\}$. Thus, $\adim_2(C_{2n})\le 2$. To
see that $\adim_2(C_{2n})=2$ we can observe that any set with only one vertex
$h$ is not a $1$-antiresolving set, since for the vertex $f$ being diametral
with $h$ there does not exist any other vertex $f'$ having the same metric
representation with respect to $\{h\}$. On the other hand, if $n$ is odd and
$a$ is any vertex of $C_n$, then we can check that for
any vertex $b\ne a$, there exists exactly one vertex $c\ne a,b$, such
that $b,c$ have the same metric representation with respect to $\{a\}$. Thus,
$\adim_2(C_{2n+1})=1$.
\end{proof}

\section{The particular case of trees}\label{section-trees}

Let $T$ be a tree and let $u$ be a vertex of $T$ of degree at least two. Let
$v$ be a neighbor of $u$. A $v$-branch of $T$ at $u$ is the subtree $T_{u,v}$
obtained from the union of all length maximal paths beginning in $u$, passing
throughout $v$ and finishing at a vertex of degree one in $T$. Given a
$y$-branch $T_{x,y}$ at $x$, we say that $\xi(T_{x,y})$ is the eccentricity of
the vertex $x$ in the $y$-branch $T_{x,y}$.
Two branches $T_{x,y_1}$ and $T_{x,y_2}$ at $x$ are $\xi_x$-\emph{equivalent}
if $\xi(T_{x,y_1})=\xi(T_{x,y_2})$.
For every vertex $x$ of $T$, let $\xi(x)$
represents the maximum number of pairwise $\xi_x$-equivalent branches at $x$ and
let $l_{\xi}(x)$ equals the length of any $\xi_x$-equivalent branch. Now, for
any tree $T$, we define the following parameter:
$$\xi(T)=\max_{x\in V(T):\delta(x)\ge 2}\{\xi(x)\}.$$
An example which helps to clarify the above definitions is given in Figure
\ref{example}. There we have a tree $T$ satisfying the following. The vertex
$v_5$ has 4 branches: $T_{v_5,v_6}$, $T_{v_5,v_{10}}$, $T_{v_5,v_{15}}$ and
$T_{v_5,v_4}$. For instance
$V(T_{v_5,v_{15}})=\{v_5,v_{15},v_{16},v_{17},v_{11},v_{18},v_{14},v_{13}\}$.
We observe that $\xi(T_{v_5,v_6})=2$, $\xi(T_{v_5,v_{10}})=1$,
$\xi(T_{v_5,v_{15}})=3$ and $\xi(T_{v_5,v_4})=4$. So, $v_5$ has no
$\xi_{v_5}$-equivalent branches and $\xi(v_5)=0$. Similarly, it can be noticed
that $v_3$ and $v_{15}$ are the only vertices of $T$ which have equivalent
branches. That is, $T_{v_3,v_2}$ and $T_{v_3,v_9}$ are $\xi_{v_3}$-equivalent,
since $\xi(T_{v_3,v_2})=\xi(T_{v_3,v_{9}})=2$. Thus $\xi(v_3)=2$ and
$l_{\xi}(v_3)=2$. Analogously,
$\xi(T_{v_{15},v_{11}})=\xi(T_{v_{15},v_{10}})=\xi(T_{v_{15},v_{14}})=2$ and
$\xi(v_{15})=3$, $l_{\xi}(v_{15})=2$. Therefore $\xi(T)=3$.

\begin{figure}[ht]
\centering
\includegraphics[width=0.95\textwidth]{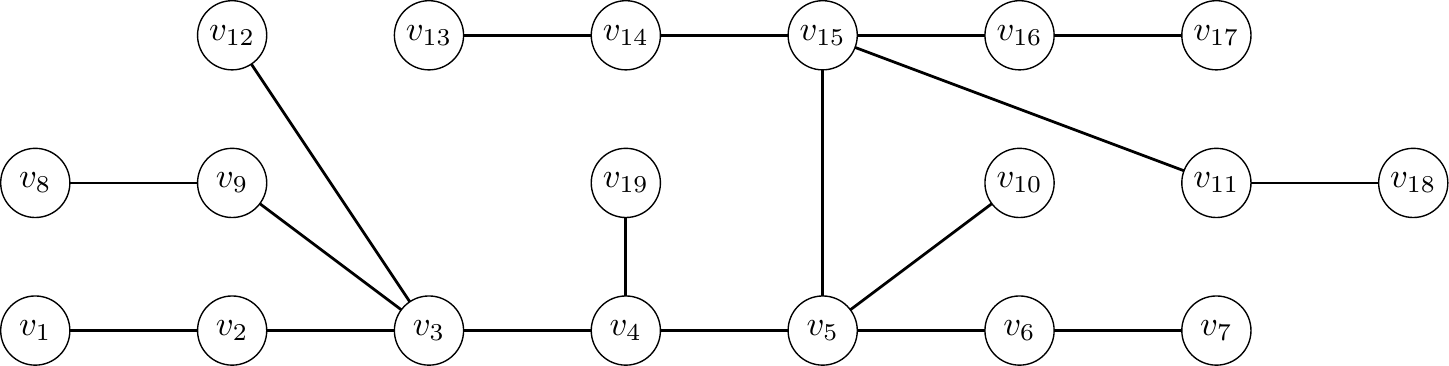}
\caption{A $3$-metric antidimensional tree $T$.}
\label{example}
\end{figure}

Now, for the particular case of
trees, we next use the definition of $\phi(G)$ already presented in Section
\ref{sec_metric_antidimension}. As an example, for the tree of Figure
\ref{example} we have that, for instance, $\phi(v_3)=3$, $\phi(v_4)=3$ and
$\phi(v_5)=2$. Also, some
calculations give that $\phi(T)=3$.

Now, with the definitions above we present the following result.

\begin{theorem}\label{dimensional-trees}
Any tree $T$ is $k$-metric antidimensional for some $k\ge
\max\{\phi(T),\xi(T)\}$.
\end{theorem}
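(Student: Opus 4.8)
The plan is to split the lower bound into its two halves. The inequality $k\ge\phi(T)$ is immediate, since a tree is in particular a connected graph and Theorem~\ref{dimensional-phi} already gives a $k$-antiresolving set witnessing $k\ge\phi(T)$. All the work therefore goes into producing a $j$-antiresolving set with $j\ge\xi(T)$: once both witnesses exist, the largest integer $k$ for which $T$ admits a $k$-antiresolving set satisfies $k\ge\phi(T)$ and $k\ge\xi(T)$ simultaneously, hence $k\ge\max\{\phi(T),\xi(T)\}$. If $\xi(T)\le 1$ the second bound is vacuous (every graph is $k$-metric antidimensional for some $k\ge 1$), so I would assume $\xi(T)=m\ge 2$ and fix a vertex $x$ with $\delta(x)\ge 2$ realizing $\xi(x)=m$; by definition this yields $m$ pairwise $\xi_x$-equivalent branches $T_{x,y_1},\dots,T_{x,y_m}$, all of common eccentricity $\xi(T_{x,y_i})=l_{\xi}(x)=l$.

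The set I would test is
$$S=V(T)\setminus\bigcup_{i=1}^m\bigl(V(T_{x,y_i})\setminus\{x\}\bigr),$$
i.e. keep $x$ together with everything lying outside the $m$ equivalent branches, and push the remaining vertices of those branches into $V(T)-S$. The crux is a single distance identity exploiting that $T$ is a tree. Any $v\in V(T)-S$ lies in exactly one branch $T_{x,y_i}$, while every $w\in S$ either equals $x$ or lies outside that branch; since $x$ is a cut vertex separating $T_{x,y_i}$ from the rest of the tree, the unique $v$--$w$ path passes through $x$, so $d_T(v,w)=d_T(v,x)+d_T(x,w)$. Consequently $r(v\mid S)$ depends only on $d_T(v,x)$, and two vertices of $V(T)-S$ are $\sim_S$-equivalent precisely when they are equidistant from $x$. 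Thus the classes of $C_S$ are exactly the nonempty sets $D_j=\{v\in V(T)-S:\ d_T(v,x)=j\}$.

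It then remains to count. Because each branch $T_{x,y_i}$ has eccentricity exactly $l$ at $x$, it contains an $x$-path of length $l$, hence at least one vertex at each distance $j\in\{1,\dots,l\}$, and no vertex of the branch is farther than $l$ from $x$. Distinct branches meet only at $x$, so for every $1\le j\le l$ the class $D_j$ picks up at least one vertex from each of the $m$ branches, giving $|D_j|\ge m$, while no $D_j$ with $j>l$ is nonempty. Hence $\min_{X\in C_S}|X|\ge m$, which says exactly that $S$ is a $k$-antiresolving set with $k\ge m=\xi(T)$. Combined with the $\phi$-witness from Theorem~\ref{dimensional-phi}, this finishes the proof.

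I expect the delicate points to be the cut-vertex distance identity and the final count, rather than the bookkeeping. For the identity one must be sure that $S$ retains no vertex of the chosen branches other than $x$, so that every path from a branch vertex out to $S$ is genuinely forced through $x$; this is precisely why the construction removes the branches rather than merely their longest paths. For the count, the essential input is that \emph{equal} eccentricity $l$ guarantees both that every intermediate distance $1\le j\le l$ is populated in each branch and that nothing exceeds $l$, which is what makes every surviving class have size at least $m$.
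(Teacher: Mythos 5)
Your proposal is correct and follows essentially the same route as the paper's own proof: the $\phi(T)$ half is delegated to Theorem~\ref{dimensional-phi}, and the $\xi(T)$ half uses exactly the same witness set (the complement of the $\xi(T)$ pairwise $\xi_x$-equivalent branches, with $x$ retained), arguing that vertices of the removed branches are grouped by their distance to $x$. Your write-up is in fact somewhat more careful than the paper's --- making the cut-vertex distance identity and the class-size count explicit, and avoiding the paper's unnecessary claim that some class has size \emph{exactly} $\xi(T)$ --- but the underlying construction and argument are the same.
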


\begin{proof}
From Theorem \ref{dimensional-phi} it follows that $k\ge \phi(T)$. Now, let $x$
be a vertex of degree at least two in $T$ such that $\xi(T)=\xi(x)$. Hence,
there exist $\xi(T)$ disjoint paths beginning in $x$, passing throughout a
vertex $y_j$ (neighbor of $x$), and ending in a vertex $w_j$ of degree one in
$T$ with $j\in \{1,...,\xi(T)\}$. Moreover, every $y_j$-branch $T_{x,y_j}$ does
not contain any other vertex further away from $x$ than $w_j$.

We consider now the set
$$A=V(T)-\left(\bigcup_{i=1}^{\xi(T)}V(T_{x,y_j})\right)\bigcup \{x\}.$$
Notice that for any vertex $u\not\in A$, there exist at least $\xi(T)-1$
different
vertices $v_1,v_2,....v_{\xi(T)-1}$ in $V(T)-A$ such that
$d(u,z)=d(v_1,z)=...=d(v_{\xi(T)-1},z)$ for every $z\in A$. Moreover, since
$\xi(T)=\xi(x)$, it follows that there exists a vertex $u'$ such that there are
exactly $\xi(T)-1$ different vertices satisfying the above mentioned. Thus, $A$
is a $\xi(T)$-antiresolving set and, as a
consequence, $T$ is $k$-metric
antidimensional for some $k\ge \xi(T)$.

Therefore we obtain that $T$ is $k$-metric antidimensional for some
$k\ge\max\{\phi(T),\xi(T)\}$ and the proof is complete.
\end{proof}

According to Theorem \ref{dimensional-trees}, we conclude that the tree $T$ in
Figure \ref{example} is $k$-metric antidimensional for some $k\ge 3$, since
$\phi(T)=3$ and
$\xi(T)=3$; that tree is indeed $3$-antidimensional. If we add some extra
vertices to this mentioned tree, like in Figure \ref{example2}, we obtain that
$\phi(T)=5$ (since $\phi(v_4)=5$), and it remains $\xi(T)=3$. Thus, this new
tree is $k$-metric antidimensional for some $k\ge 5$, and by Remark
\ref{remark-Delta} we have that $k=5$.

\begin{figure}
\centering
     \includegraphics[width=0.95\textwidth]{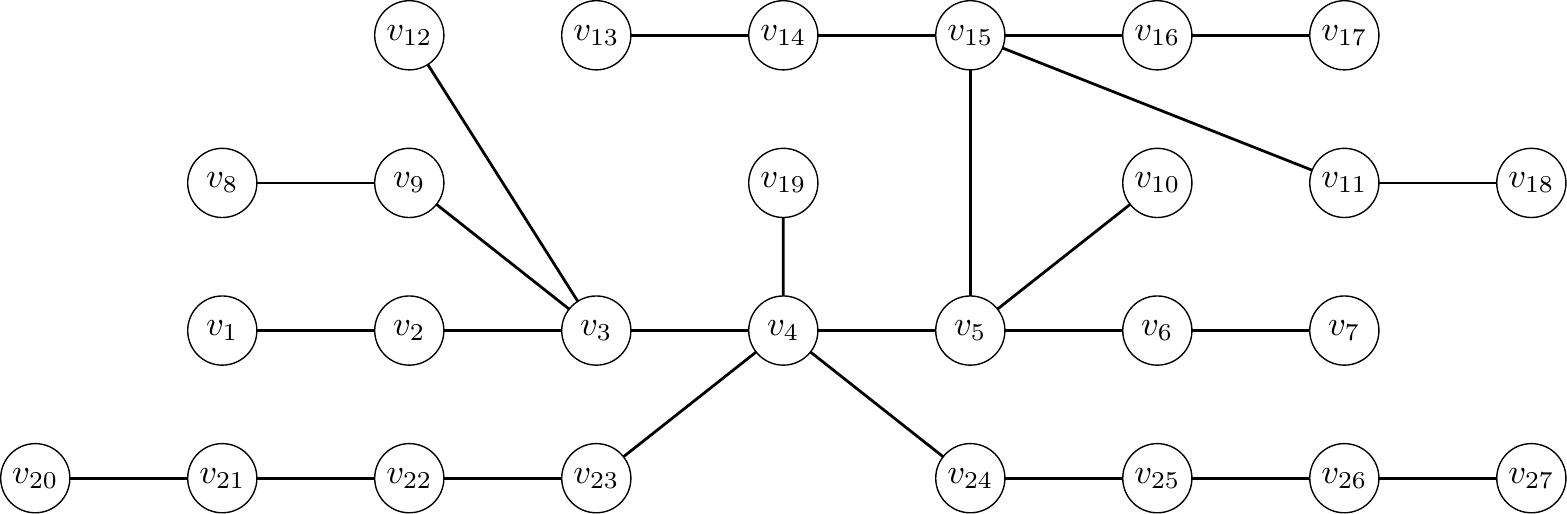}
\caption{A $5$-metric antidimensional tree $T$.}
\label{example2}
\end{figure}

Notice that $\xi(T)>\phi(T)$ holds for some trees. For instance, if we
take a star
graph $S_{1,n}$, $n\ge 4$, and we add an extra vertex $x$ connected by an edge
with one leaf $y$ of $S_{1,n}$, then we have a tree $T$ such that $\phi(T)=2$
(for the vertex $y$, $\phi(y)=2$) and $\xi(T)=n-1$.

Moreover, there are graphs in which the bound of Theorem
\ref{dimensional-trees} is not achieved. An example of this appears in Figure
\ref{example3}. There we have a tree $T$ such that $\xi(T)=3$ ($\xi(v_7)=3$)
and $\phi(T)=3$ ($\phi(v_{12})=3$). Nevertheless the set
$\{v_{10},v_{11},v_{12},v_{13},v_{14},v_{18},v_{19}\}$ is a $4$-antiresolving
set.

\begin{figure}[!ht]
\centering
     \includegraphics[width=0.95\textwidth]{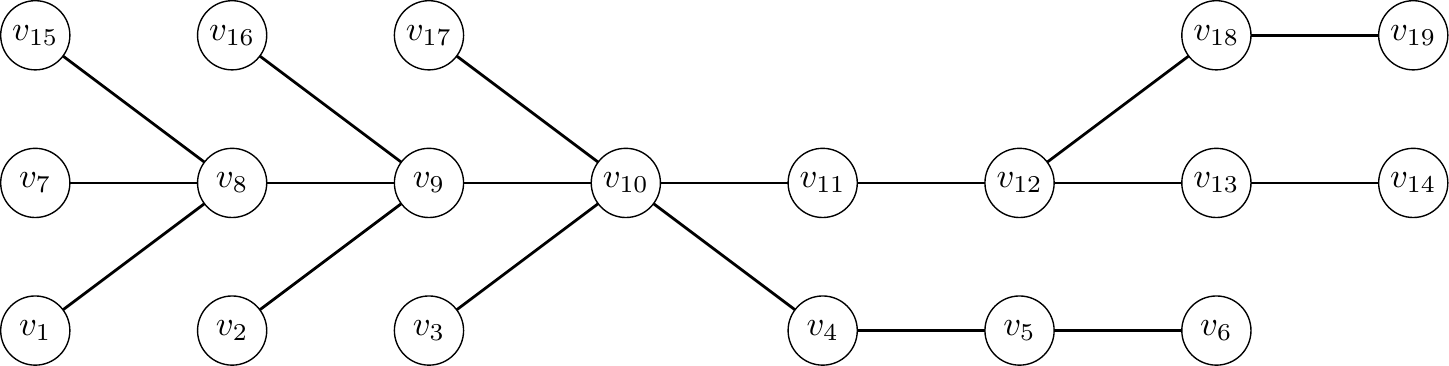}
\caption{The set $\{v_{10},v_{11},v_{12},v_{13},v_{14},v_{18},v_{19}\}$ is a
$4$-metric antiresolving set.}
\label{example3}
\end{figure}

\subsection{The $k$-metric antidimension of trees}

Once we have a lower bound for the integer $k'$ for which a given tree $T$ is
$k'$-metric antidimensional, we are able to compute its $k$-metric
antidimension for a suitable value $k\le k'$. We first notice that if $T$ is
$1$-metric antidimensional, then any $1$-metric antiresolving set is an
standard resolving set, as defined in \cite{Harary1976,Slater1975} and, in such
a case, $\adim_1(T)=dim(T)$. Since it is not our goal to study such a case,
from now on we consider only those trees being $k$-metric antidimensional for
some $k\ge 2$.

\begin{remark}\label{remark-phi-equal-1}
Let $T$ be a $k$-metric antidimensional tree and let $x\in V(T)$ such that
$\phi(x)=t$. If $t\ge 2$, then $\adim_t(T)=1$.
\end{remark}

\begin{proof}
Since $t\ge 2$, then for every vertex $y\ne x$, there exist at least $t-1$
vertices $v_1,...,v_{t-1}\in V(T)-\{x,y\}$ such that
$d(y,x)=d(v_1,x)=...=d(v_{\phi(T)-1},x)$. Moreover, there exists a vertex $y'$
such that there are exactly $t-1$ different vertices satisfying the above
mentioned. Thus, $\{x\}$ is a $t$-metric antiresolving set and, as a
consequence, $\adim_t(T)=1$, since on the other hand, $\adim_t(G)\ge 1$ for any
graph $G$.
\end{proof}

\begin{corollary}
For any tree $T$ such that $\phi(T)\ge 2$, $\adim_{\phi(T)}(T)=1$.
\end{corollary}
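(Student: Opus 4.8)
The plan is to obtain the statement as an immediate consequence of Remark~\ref{remark-phi-equal-1}. First I would invoke the definition $\phi(T)=\max_{v\in V(T)}\{\phi(v)\}$ to produce a vertex $x\in V(T)$ attaining this maximum, so that $\phi(x)=\phi(T)$. Since we are assuming $\phi(T)\ge 2$, this same vertex satisfies $\phi(x)=t$ with $t\ge 2$, which is precisely the hypothesis needed to apply the preceding remark.

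Next I would record that $T$ is indeed $k$-metric antidimensional for some $k\ge\phi(T)$ by Theorem~\ref{dimensional-phi} (equivalently Theorem~\ref{dimensional-trees}), so that a $\phi(T)$-antiresolving set exists and the quantity $\adim_{\phi(T)}(T)$ is well defined. Applying Remark~\ref{remark-phi-equal-1} to the vertex $x$ with $t=\phi(x)=\phi(T)$ then shows that $\{x\}$ is a $\phi(T)$-antiresolving set, whence $\adim_{\phi(T)}(T)\le 1$; combined with the trivial lower bound $\adim_k(G)\ge 1$ valid for every graph, this forces $\adim_{\phi(T)}(T)=1$.

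Since every step is a direct substitution into Remark~\ref{remark-phi-equal-1}, I do not anticipate any genuine obstacle. The only point worth checking is that the maximizing vertex $x$ has degree at least two, so that the eccentricity-based quantity $\phi$ behaves as the underlying arguments require; but this is automatic, because a leaf $x$ of a tree has $|d_1(x)|=1$ and hence $\phi(x)=1<2$, so no degree-one vertex can attain $\phi(T)$ when $\phi(T)\ge 2$. Thus the corollary is simply the specialization of Remark~\ref{remark-phi-equal-1} to a vertex realizing $\phi(T)$.
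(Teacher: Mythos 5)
Your proposal is correct and matches the paper's (implicit) justification: the corollary is exactly Remark~\ref{remark-phi-equal-1} applied to a vertex $x$ attaining the maximum $\phi(x)=\phi(T)\ge 2$, together with the trivial lower bound $\adim_k(G)\ge 1$. Your additional observations (existence via Theorem~\ref{dimensional-phi}, and that no leaf can realize $\phi(T)$ when $\phi(T)\ge 2$) are accurate but not needed, since the remark's hypothesis only requires $\phi(x)=t\ge 2$.
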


According to the results above, it remains to study the $k$-metric dimension of
trees for the case in which every vertex $v$ of $T$ satisfies that $\phi(v)\ne
k$.  To do so, we need to introduce some notations.

We denote by $\Xi_k(T)$, for some $k\in \{2,...,\xi(T)\}$, the set of vertices
$v\in V(T)$ such that $\xi(v)\ge k$. Now, for every $v\in \Xi_k(T)$, let
$$N_{<l_{\xi}}(v)=\{x\in V(T_{v,u})\,:\,u\in N(v)\mbox{ and
}\xi(T_{v,u})<l_{\xi}(v)\}-\{v\},$$
and for the set of vertices $u\in N(v)$ such that $\xi(T_{v,u})=l_{\xi}(v)$,
let $N_{=l_{\xi}}^k(v)$ be the maximum cardinality among all possible sets
obtained as the union of $k$ vertex sets of the branches $T_{v,u}$ where $u\in
N(v)$ minus the vertex $v$ itself. As an example, we consider the tree of
Figure \ref{example}. For $k=2$, there we have that $\Xi_k(T)=\{v_3,v_{15}\}$,
$N_{<l_{\xi}}(v_3)=\{v_{12}\}$, $N_{<l_{\xi}}(v_{15})=\emptyset$,
$N_{=l_{\xi}}^2(v_3)=\{v_1,v_2,v_8,v_9\}$ and
$N_{=l_{\xi}}^2(v_{15})=\{v_{13},v_{14},v_{16},v_{17}\}$ (notice that
$N_{=l_{\xi}}^2(v_{15})$ can be different from this set, but it always has five
vertices).

With this definition we are able to present the following result, where we
analyze only those graphs being $k'$-metric antidimensional for
$k'=\max\{\phi(T),\xi(T)\}$.

\begin{theorem}\label{dimension-trees-bound}
Let $T$ be a $k'$-metric antidimensional of order $n$ with
$k'=\max\{\phi(T),\xi(T)\}$. Then for any $k\le k'$,
$$\adim_k(T)\le
n-\left|\bigcup_{v\in\Xi_i(T)}N_{<l_{\xi}}(v)\right|-\left|\bigcup_{v\in
\Xi_i(T)}N_{=l_{\xi}}^k(v)\right|.$$
\end{theorem}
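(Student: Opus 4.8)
The plan is to prove the inequality by producing one explicit set of cardinality equal to the right-hand side and checking that it is a $k$-antiresolving set; since $\adim_k(T)$ is the minimum size of such a set, this yields the bound. The obvious candidate is the complement of the discarded vertices,
\[
S \;=\; V(T)\setminus\left[\left(\bigcup_{v\in\Xi_k(T)}N_{<l_\xi}(v)\right)\cup\left(\bigcup_{v\in\Xi_k(T)}N_{=l_\xi}^k(v)\right)\right].
\]
Assuming the two discarded families are disjoint, $|S|$ is exactly $n$ minus the two cardinalities in the statement, so everything reduces to understanding the classes of $\sim_S$ on $V(T)-S$.

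The engine of that analysis is the standard cut-vertex behaviour of distances in a tree. Whenever a whole branch $T_{v,u}$ has been discarded — as happens for every short branch counted by $N_{<l_\xi}(v)$ and for each of the $k$ equivalent branches counted by $N_{=l_\xi}^k(v)$ — every $s\in S$ lies outside $T_{v,u}$, so $d(w,s)=d(w,v)+d(v,s)$ for each $w$ in that branch; hence $r(w\mid S)$ is a function of $d(w,v)$ alone. Fixing a reference vertex $v\in\Xi_k(T)$, the discarded vertices in its branches therefore fall into classes indexed by the distance $i\in\{1,\dots,l_\xi(v)\}$ to $v$, and vertices attached to two different reference vertices coincide in a class only when those reference vertices already share their representation with respect to $S$. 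Because each of the $k$ chosen equivalent branches has eccentricity $l_\xi(v)$, it meets every level $i\le l_\xi(v)$, so each of these classes has at least $k$ elements; the short branches, living only at levels $i<l_\xi(v)$, merge into existing classes without creating smaller ones. This establishes that every class of $\sim_S$ has size at least $k$.

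The delicate point — and the step I expect to be the main obstacle — is to guarantee that the \emph{minimum} class has size exactly $k$, so that the witnessing set is genuinely $k$-antiresolving rather than $k''$-antiresolving for some $k''>k$. This is not automatic for the naive complement above: at distance $1$ each discarded branch contributes exactly its neighbour of $v$, while a short branch adds a further vertex there and a branch that is bushy near its end contributes several vertices to the deepest level, so every class can end up strictly larger than $k$. Consequently the set realising the bound need not be the naive complement itself, but a set of the same cardinality chosen so that some class has size exactly $k$; the freedom for this comes from the maximality built into $N_{=l_\xi}^k(v)$ (which fixes how many vertices are discarded but leaves latitude in which ones survive) together with the option of anchoring the reference inside a discarded branch. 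Alongside this exactness question runs the bookkeeping that the families $\{N_{<l_\xi}(v)\}$ and $\{N_{=l_\xi}^k(v)\}$ are pairwise disjoint, so that $|S|$ equals the stated quantity, and that discarding branches at one reference vertex never shrinks a class at another below $k$; these follow because distinct branches meet only at their common root, but verifying them uniformly over all of $\Xi_k(T)$ is where the real care lies.
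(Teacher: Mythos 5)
Your construction is exactly the one the paper uses: take $S$ to be the complement of $\bigcup_{v\in\Xi_k(T)}N_{<l_{\xi}}(v)\cup\bigcup_{v\in\Xi_k(T)}N_{=l_{\xi}}^k(v)$, and your cut-vertex argument that every equivalence class of $V(T)-S$ under equality of metric representations has at least $k$ elements is a more careful version of what the paper merely asserts. The problem is that you stop precisely where the proof has to be finished. You correctly observe that the minimum class size may exceed $k$, so that $S$ would be, under Definition~\ref{def_antiresolving_set}, a $k''$-antiresolving set for some $k''>k$ rather than a $k$-antiresolving set; but your proposed remedy --- replace $S$ by ``a set of the same cardinality chosen so that some class has size exactly $k$'' --- is never constructed, and no argument is given that such a set exists. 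The freedom you appeal to (latitude in which vertices enter $N_{=l_{\xi}}^k(v)$, or ``anchoring the reference inside a discarded branch'') is not shown to produce a class of size exactly $k$, so as it stands this is an acknowledged missing step, not a proof.

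The paper closes this case by a different and cheaper move: if no vertex of $V(T)-S$ has exactly $k-1$ companions, it concedes that $S$ is a $k''$-antiresolving set for some $k''\ge k$ and then invokes the inequality $\adim_k(T)\le\adim_{k''}(T)$, which yields $\adim_k(T)\le\adim_{k''}(T)\le|S|$ and hence the bound anyway. In other words, the paper never needs exactness of the minimum class size; it only needs monotonicity of $\adim_k$ with respect to $k$. (One may question that monotonicity --- the paper asserts it without proof, and it presupposes that $T$ admits a $k$-antiresolving set at all --- but it is at least a concrete statement that completes the chain of reasoning, which your sketch does not.) Your side remark that the two discarded unions must be disjoint for $|S|$ to equal the right-hand side is a fair observation that the paper glosses over, since without disjointness one only gets $|S|\ge n-\left|\bigcup_{v}N_{<l_{\xi}}(v)\right|-\left|\bigcup_{v}N_{=l_{\xi}}^k(v)\right|$, which is the wrong direction; but that is a bookkeeping issue common to both arguments, not a substitute for the missing final step in yours.
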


\begin{proof}
We consider a set $S\subset V(T)$ given by
$$S=V(T)-\bigcup_{v\in \Xi_k(T)}N_{<l_{\xi}}(v)-\bigcup_{v\in
\Xi_k(T)}N_{=l_{\xi}}^k(v).$$
In this sense, for any vertex $x\notin S$, there exists at least $k-1$ vertices
$y_1,y_2,...,y_{k-1}$ not in $S$ such that $d(x,w)=d(y_1,w)=...=d(y_{k-1},w)$
for every $w\in S$. Moreover, if there exists at least one vertex $x'\notin S$
for which there are exactly $k-1$ vertices not in $S$ satisfying the above
mentioned, then $S$ is a $k$-metric antiresolving set and the result follows
since the cardinality of $S$ is given by the formula of the theorem. On the
contrary, if such a vertex does not exist, then $S$ is a $k''$-metric
antiresolving set for $G$ for some $k''\ge k$. Since in this case,
$\adim_{k''}(G)\ge \adim_k(G)$ we obtain the result.
\end{proof}

Consider now the example of Figure \ref{example}. According to the result
above, we have that the set
$S=\{v_3,v_4,v_5,v_6,v_7,v_{10},v_{11},v_{15},v_{18},v_{19}\}$ is a $2$-metric
antiresolving set for such a tree $T$ and $\adim_2(T)\le 10$. Nevertheless,
since $\phi(v_5)=2$, from Remark \ref{remark-phi-equal-1} we have that
$\adim_2(T)=1$. Next we present a family of trees, where the bound of Theorem
\ref{dimension-trees-bound} is achieved.

We consider the family $\mathcal{F}$ of trees $T_r$ satisfying the following
conditions.
\begin{itemize}
\item The center of $T_r$ is formed by two adjacent vertices, say $x,y$.
\item $T_r$ is ``rooted'' in $x,y$.
\item $T_r$ is a complete $r$-ary tree (each vertex of degree greater than one
has $r$ children)
\item Any two leaves being descendants of the same root ($x$ or $y$), have the
same distance to this root.
\end{itemize}
An example of a tree $T_3$ of the family $\mathcal{F}$ is given in Figure
\ref{example4}.

\begin{figure}
\centering
     \includegraphics[width=0.95\textwidth]{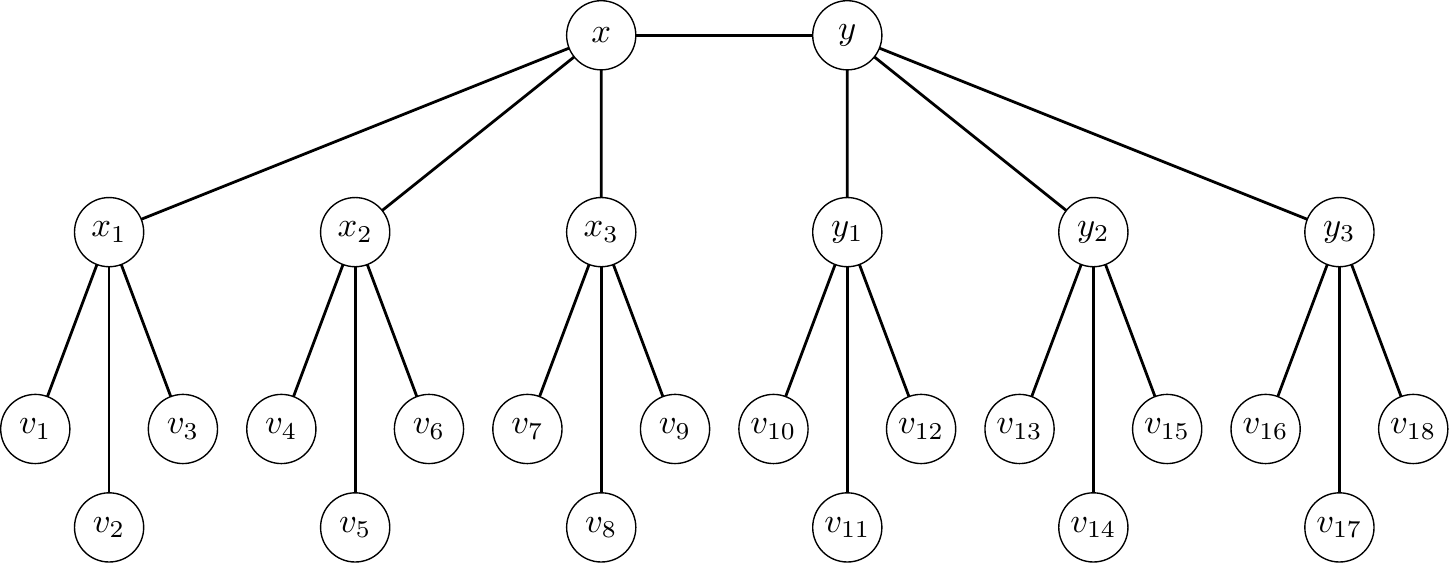}
\caption{A tree $T_3$ of the family $\mathcal{F}$. The set $\{x,y\}$ is a
$3$-metric antiresolving set of minimum cardinality.}
\label{example4}
\end{figure}

It is straightforward to observe that $\xi(T_r)=r$ and $\phi(T_r)=r+1$. Thus,
$T_r$ is $k'$-metric antidimensional for some $k\ge r+1$, and by Remark
\ref{remark-Delta} we have that $k=r+1$. Now on, we compute the $r$-metric
antidimension of $T_r$.
According to the construction of the family $\mathcal{F}$, we see that the root
vertices $x,y$ of a tree $T_r\in \mathcal{F}$ satisfy that $x,y\in \Xi_i(T_r)$.
Also, $N_{<l_{\xi}}(x)=N_{<l_{\xi}}(y)=\emptyset$ and the sets
$N_{=l_{\xi}}^r(x)$, $N_{=l_{\xi}}^r(y)$ are formed by the set of all their
corresponding descendants (this fact makes unnecessary to consider
other vertices of $T_r$). As a consequence of this, by Theorem
\ref{dimension-trees-bound} we have that $\{x,y\}$ is a $r$-metric
antiresolving set and $\adim_r(T_r)\le 2$. Since, for any non-leaf vertex
$u$ of $T_r$ satisfies that $\phi(u)=4$,
we have that any singleton vertex (being not a leaf) is a $(r+1)$-metric
antiresolving set. Thus, $\adim_r(T_r)\ge 2$ and we have that $\adim_r(T_r)=2$,
which makes that the bound of Theorem \ref{dimension-trees-bound} is tight.

\section{Discussion and conclusions}\label{sec_conclusions}

In this article we have introduced a new problem in Graph Theory (the $k$-metric
antidimension problem) that resembles
to the well-known metric dimension problem. The $k$-metric
antidimension is the basis of our novel privacy measure $(k,
\ell)$-anonymity. This measures quantifies the level of privacy offered by an
outsourced social graph against active attacks. Consequently,
privacy-preserving methods
for the publication of social networks ought to consider $(k, \ell)$-anonymity
as
one of their privacy goal.

We have proposed a true-biased algorithm aimed at finding both a
$k$-antiresolving set and a $k$-antiresolving basis in a graph. The algorithm,
although computationally demanding, reached a success rate above $80 \%$ during
the executed experiments when looking for a $k$-antiresolving basis. We expect
future experiments to be conducted over real-life social graphs so that
privacy-preserving methods satisfying $(k, \ell)$-anonymity can be
empirically evaluated in terms of utility and resistance to active attacks.

We have also began the study of mathematical properties of the
$k$-antiresolving sets and the $k$-metric antidimension of graphs. We have
studied some particular graph families like cycles, paths, complete bipartite
graphs and trees. For instance, we have obtained that for any path $P_n$ of odd
order, $\adim_2(P_n)=1$ and for any cycle $C_n$ it follows that
$\adim_2(C_n)=1$ if $n$ is odd, and $\adim_2(C_n)=2$ if $n$ is even. Also, for
every complete bipartite graph $K_{r,t}$, $\adim_k(K_{r,t})=r+t-k$ if $t< k\le
r$, and  $\adim_k(K_{r,t})=r+t-2k$ if $1< k\le t$. For the case of trees we
have presented a tight lower bound for its $k$-metric antidimension in terms of
the order of the tree and the order of some subtrees satisfying some specific
conditions. We have also described an infinite family of $k$-ary trees which
achieve this bound.

Finally, this article opens new and challenging open problems related to the
$k$-metric antidimension of graphs and the privacy concept $(k,
\ell)$-anonymity. For instance, it would be interesting to characterize the
family of graphs such that they are $1$-metric antidimensional, as well as
looking for a close relationship between the $k$-metric antidimension and the
$k$-metric dimension of a graph. In particular, those families of graphs that
resemble to social graphs must be considered. The
computational complexity of computing the
$k$-metric antidimension should also be addressed. In case the
problem is NP-complete, efficient heuristics and privacy-preserving methods
need to be developed so as to compute the $k$-metric antidimension and,
ultimately, transform a social graph into a $(k, \ell)$-anonymous graph for
given values of $k$ and $\ell$.

\section*{Bibliography }
\bibliographystyle{abbrv}

\end{document}